\documentclass[a4paper,10pt]{amsart}

\usepackage[utf8]{inputenc}

\usepackage{graphicx}

\usepackage{cite}

\usepackage{hyperref}

\hypersetup{%
pdftitle={},
pdfsubject={Mathematics},
pdfauthor={Manfred Madritsch},
pdfkeywords={}
hyperindex=true,plainpages=false}

\usepackage{a4wide}

\usepackage{amsmath}
\usepackage{amsfonts}
\usepackage{amssymb}
\usepackage{amsthm}

\newtheorem{lem}{Lemma}[section]
\newtheorem{thm}[lem]{Theorem}

\newtheorem{cor}[lem]{Corollary}

\numberwithin{equation}{section}

\newtheorem*{cor*}{Corollary}
\newtheorem*{thm*}{Theorem}

\theoremstyle{definition}

\newtheorem{example}{Example}

\theoremstyle{remark}
\newtheorem{rem}[lem]{Remark}

\allowdisplaybreaks[3]

\newcommand{\N}{\mathbb{N}}
\newcommand{\Z}{\mathbb{Z}}

\newcommand{\R}{\mathbb{R}}

\renewcommand{\lvert}{\left\vert}
\renewcommand{\rvert}{\right\vert}
\renewcommand{\lVert}{\left\Vert}
\renewcommand{\rVert}{\right\Vert}


\title{Non-normal numbers in dynamical systems fulfilling the specification property}

\author[M. G. Madritsch]{Manfred G. Madritsch}
\address[M. G. Madritsch]{
\noindent 1. Universit\'e de Lorraine, Institut Elie Cartan de Lorraine, UMR 7502, Vandoeuvre-l\`es-Nancy, F-54506, France;\newline
\noindent 2. CNRS, Institut Elie Cartan de Lorraine, UMR 7502, Vandoeuvre-l\`es-Nancy, F-54506, France}
\email{manfred.madritsch@univ-lorraine.fr}

\author[I. Petrykiewicz]{Izabela Petrykiewicz}
\address[I. Petrykiewicz]{Universit\'e Joseph Fourier, Institut Fourier, 100 rue des maths, 38402 St Martin d'H\`{e}res, France
}
\email{izabela.petrykiewicz@ujf-grenoble.fr}

\subjclass[2010]{}

\keywords{Baire category, dynamical systems, specification property,
  non-normal numbers}

\date{\today}

\begin{document}

\begin{abstract}
  In the present paper we want to focus on this dichotomy of the
  non-normal numbers -- on the one hand they are a set of measure zero
  and on the other hand they are residual -- for dynamical system
  fulfilling the specification property. These dynamical systems are
  motivated by $\beta$-expansions. We consider the limiting
  frequencies of digits in the words of the languagse arising from these
  dynamical systems, and show that not only a typical $x$ in the sense
  of Baire is non-normal, but also its Ces\`aro variants diverge.
\end{abstract}

\maketitle

\section{Introduction}\label{sec:introduction}
Let $N\geq2$ be an integer, called the base, and
$\Sigma:=\{0,1,\ldots,N-1\}$, called the set of digits. Then for every
$x\in[0,1)$ we denote by 
\[
  x=\sum_{h=1}^\infty d_h(x)N^{-h},
\]
where $d_h(x)\in\Sigma$ for all $h\geq1$, the unique non-terminating
$N$-ary expansion of $x$. For every positive integer $n$ and a block of digits
$\mathbf{b}=b_1\ldots b_k\in\Sigma^k$ we write
\[
  \Pi(x,\mathbf{b},n):=\frac{\lvert\{0\leq
    i<n:d_{i+1}(x)=b_1,\ldots,d_{i+k}(x)=b_k\}\rvert}n 
\]
for the frequency of the block $\mathbf{b}$ among the first $n$ digits of the
$N$-ary expansion of $x$. Furthermore, let
\[
  \Pi_k(x,n):=(\Pi(x,\mathbf{b},n))_{\mathbf{b}\in\Sigma^k}
\]
be the vector of frequencies of all blocks $\mathbf{b}$ of length $k$.

Now we call a number $k$-normal if for every block
$\mathbf{b}\in\Sigma^k$ of digits of length $k$, the limit of the frequency
$\Pi(x,\mathbf{b},n)$ exists and equals $N^{-k}$.
A number is called normal with respect to base $N$ if it is $k$-normal for all
$k\geq1$. Furthermore, a number is called absolutely normal if it is normal to any
base $N\geq2$.

On the one hand, it is a classical result due to Borel
\cite{Borel1909:les_probabilites_denombrables} that Lebesgue almost all numbers
are absolutely normal. So the set of normal numbers is large from a measure
theoretical viewpoint.

On the other hand, it suffices for a number to be not normal if the
limit of the frequency vector is not the uniform one.
First results concerning the Hausdorff dimension or the Baire category
of non-normal numbers were obtained by {\v{S}}al{\'a}t
\cite{salat1966:remark_on_normal} and Volkmann \cite{volkmann1964:non_normal_numbers}. 
Stronger variants of non-normal numbers were of recent interest. In particular, Albeverio \textit{et
  al.} \cite{albeverio_pratsiovytyi_torbin2005:singular_probability_distributions, 
albeverio_pratsiovytyi_torbin2005:topological_and_fractal}
considered the fractal structure of essentially non-normal
numbers and their variants. The theory of multifractal divergence
points lead to the investigation of extremely non-normal numbers by Olsen \cite{
olsen2004:applications_multifractal_divergence01,
olsen2004:applications_multifractal_divergence02}
and Olsen and Winter \cite{olsen_winter2003:normal_and_non}. The
important result for our considerations is that both essentially and
extremely non-normal numbers are large from a topological point of
view.
\section{Definitions and statement of result}
We start with the definition of a dynamical system.
Let $M$ be a compact metric space and $\phi:M\to M$ be a continuous map. Then
we call the pair $(M,\phi)$ a (topological) dynamical system.

The second ingredient is the definition of a topological partition.
Let $M$ be a metric space and let $\mathcal{P}=\{P_0,\ldots,P_{N-1}\}$ be a finite
collection of disjoint open sets. Then we call $\mathcal{P}$ a topological
partition (of $M$) if $M$ is the union of the closures $\overline{P_i}$ for
$i=0,\ldots,N-1$, \textit{i.e.}
\[
  M=\overline{P_0}\cup\cdots\cup\overline{P_{N-1}}.
\]

Suppose now that a dynamical system $(M,\phi)$ and a topological partition
$\mathcal{P}=\{P_0,\ldots,P_{N-1}\}$ of $M$ are given. We want to
consider the symbolic dynamical system behind. Therefore, let
$\Sigma=\{0,\ldots,N-1\}$ be the alphabet corresponding to the
topological partition $\mathcal{P}$. Furthermore, define
\[
\Sigma^k=\{0,\ldots,N-1\}^k,\quad
\Sigma^*=\bigcup_{k\geq1}\Sigma^k\cup\{\epsilon\},\quad\text{and}\quad
\Sigma^\N=\{0,\ldots,N-1\}^\N
\]
to be the set of words of length $k$, the set of finite and the set of
infinite words over $\Sigma$, respectively, where $\epsilon$ is the empty word.
For an infinite word $\omega=a_1a_2a_3\ldots\in\Sigma^\N$ and a
positive integer $n$, let $\omega\vert n=a_1a_2\ldots a_n$ denote the
truncation of $\omega$ to the $n$-th place. Finally, for
$\omega\in\Sigma^*$ we denote by $[\omega]$ the cylinder set of all 
infinite words starting with the same letters as $\omega$,
\textit{i.e.}
\[
[\omega]:=\{\gamma\in\Sigma^\N:\gamma\vert\lvert\omega\rvert=\omega\}.
\]


Now we want to describe the shift space that is generated by our
topological partition. Therefore, we
call $\omega=a_1a_2\ldots a_n\in\Sigma^n$ allowed for
$(\mathcal{P},\phi)$ if
\[
  \bigcap_{k=1}^n\phi^{-k}\left(P_{a_k}\right)\neq\emptyset.
\]
Let $\mathcal{L}_{\mathcal{P},\phi}$ be the set of allowed words. Then
$\mathcal{L}_{\mathcal{P},\phi}$ is a language and there is a unique shift
space $X_{\mathcal{P},\phi}\subseteq\Sigma^\N$, whose language is
$\mathcal{L}_{\mathcal{P},\phi}$. We call $X_{\mathcal{P},\phi}\subseteq\Sigma^\N  $ the one-sided
symbolic dynamical system corresponding to
$(\mathcal{P},\phi)$.

Furthermore, we split the language up
corresponding to the length of the words. For $k\geq1$ we denote by
$$\mathcal{L}_k=\{\omega\in\mathcal{L}_{\mathcal{P},\phi}\colon\lvert\omega\rvert=k\}.$$
Then we have that $\mathcal{L}_{\mathcal{P},\phi}=\bigcup_{k=1}^\infty\mathcal{L}_k$.

Finally, for each $\omega=a_1a_2a_3\ldots
\in X_{\mathcal{P},\phi}$ and $n\geq0$ we denote by $D_n(\omega)$ the
cylinder set of order $n$ corresponding to $\omega$ in $M$,
\textit{i.e.}
\[
D_n(\omega):=\bigcap_{k=0}^n\phi^{-k}(P_{a_k})\subseteq M.
\]

After providing all the ingredients necessary for the statement of our
result we want to link this concept with the $N$-ary representations of Section \ref{sec:introduction}.
\begin{example}\label{example:Nary}
Let $M=\R/\Z$ be the circle and $\phi:M\to M$ be defined by $\phi(x)=N x\pmod
1$. We divide $M$ into $N$ subintervals $P_0,\ldots,P_{N-1}$ of the form
$P_i=(i/N,(i+1)/N)$ and let $\Sigma=\{0,\ldots,N-1\}$. Then the
underlying system is the $N$-ary representation. Furthermore, it is easy to
verify that the language $\mathcal{L}_{\mathcal{P},\phi}(x)$ is the set of all
words over $\Sigma$, so that the one-sided symbolic dynamical system
$X_{\mathcal{P},\phi}$ is the full one-sided $N$-shift $\Sigma^\N$.
\end{example}

Our second example will be the main motivation for this paper. In
particular, we will consider $\beta$-expansions, where $\beta>1$ is
not necessarily an integer. These systems are of special interest,
since the underlying symbolic dynamical system is not the
full-shift. The first authors investigating these number systems were
Parry \cite{parry1960:eta_expansions_real} and Renyi
\cite{renyi1957:representations_real_numbers}. For a more modern
account on these number systems we refer the interested reader to the
book of Dajani and Kraaikamp
\cite{dajani_kraaikamp2002:ergodic_theory_numbers}.

\begin{example}\label{example:beta_expansion}
Let $\beta>1$ be a real number and $\phi\colon[0,1)\to[0,1)$ be the transformation given by
$$\phi(x)=\beta x\mod 1.$$
The sets $$P_i:=\left(\frac
  i\beta,\frac{i+1}\beta\right)\quad(i=0,\ldots,\lfloor\beta\rfloor-1)$$
and $$P_{\lfloor\beta\rfloor+1}:=\left(\frac{\lfloor\beta\rfloor}\beta,1\right)$$
together with $\phi$ form a number system partition of $M$. The corresponding
language is called the $\beta$-shift (\textit{cf.}
\cite{renyi1957:representations_real_numbers,
  parry1960:eta_expansions_real, dajani_kraaikamp2002:ergodic_theory_numbers}).
\end{example}

Before extending the notions of normal and non-normal numbers we want
to investigate the properties of the $\beta$-shift in more detail. We
say that a language $\mathcal{L}$ fulfills the specification property
if there exists a positive integer $j\geq0$ such that we can
concatenate any two words $\mathbf{a}$ and $\mathbf{b}$ by padding a
word of length less than $j$ in between, \textit{i.e.} if, for every
pair $\mathbf{a},\mathbf{b}\in\mathcal{L}$, there exists a word
$\mathbf{u}\in \mathcal{L}$ with $\lvert \mathbf{u}\rvert\leq j$ such
that $\mathbf{a}\mathbf{u}\mathbf{b}\in \mathcal{L}$. Furthermore, we
call the language connected of order $j$ if this padding word can
always be chosen of length $j$. Note that the $\beta$-shift fulfills
this property.

Suppose for the rest of the paper that $(M,\phi)$ is a
number system partition, together with a dynamical system
$X_{\mathcal{P},\phi}$ that fulfills the specification
property with a parameter $j$. Since the partition $\mathcal{P}$ and
the transformation $\phi$ are fixed, we may write
$X=X_{\mathcal{P},\phi}$ and
$\mathcal{L}=\mathcal{L}_{\mathcal{P},\phi}$ for short.

In order to extend the definition of normal and thus non-normal
numbers to $M$ we need that the expansion is unique. Therefore, we
suppose that $\bigcap_{n=0}^\infty\overline{D_n(\omega)}$ consists of
exactly one point. This motivates the definition of the map
$\pi_{\mathcal{P},\phi}:X\to M$ by
\[
\{\pi_{\mathcal{P},\phi}(\omega)\}=\bigcap_{n=0}^\infty\overline{D_n(\omega)}.
\]
However, the converse need not be true. In particular, we consider
Example \ref{example:beta_expansion} with $\beta=\frac{1+\sqrt{5}}2$
(the golden mean). Then clearly $\beta^2-\beta-1=0$. Now on the one hand,
every word in $X$ is mapped to a unique real number. However, if we
consider expansion of $\frac1\beta$, which lies between the two
intervals $P_0$ and $P_1$, then,
since $$\frac1\beta=\frac1{1-\frac1{\beta^2}},$$ we get that
$010101\ldots$ and $100000\ldots$ are possible expansions of
$\frac1\beta$. Similarly we get that $101010\ldots$ and $010000\ldots$
are possible expansion of $\frac1{\beta^2}$. 

However, one observes that these ambiguities originate from the
intersections of two partitions $\overline{P_i}\cap\overline{P_j}$ for
$i\neq j$. Thus we concentrate on the inner points, which somehow
correspond to the irrational numbers in the above case of the decimal
expansion. Let
\[
U=\bigcup_{i=0}^{N-1}P_i,
\]
which is an open and dense ($\overline{U}=M$) set. Then for each
$n\geq1$ the set
\[
U_n=\bigcap_{k=0}^{n-1}\phi^{-k}(U),
\]
is open and dense in $M$. Thus by the Baire Category Theorem, the set
\begin{gather}\label{mani:U-infty}
U_\infty=\bigcap_{n=0}^\infty U_n
\end{gather}
is dense. Since $M\setminus U_\infty$ is the countable union of
nowhere dense sets it suffices to show that a set is residual in
$U_\infty$ in order to show that it is in fact residual in $M$. Furthermore,
for $x\in U_\infty$ we may call $\omega$ \textbf{the} symbolic
expansion of $x$ if $\pi_{\mathcal{P},\phi}(\omega)=x$. Thus in the
following we will silently suppose that $x\in U_\infty$.

After defining the environment we want to pull over the definitions of
normal and non-normal numbers to the symbolic dynamical system. To this end let
$\mathbf{b}\in\Sigma^k$ be a block of letters of length $k$ and
$\omega=a_1a_2a_3\ldots\in X$ be the symbolic
representation of an element. Then we write
\[
\mathrm{P}(\omega,\mathbf{b},n)=\frac{\lvert\left\{0\leq i<n:a_{i+1}=b_1,\ldots,a_{i+k}=b_k\right\}\rvert}{n}
\]
for the frequency of the block $\mathbf{b}$ among the first $n$ letters of
$\omega$. In the same manner as above let 
\[
\mathrm{P}_k(\omega,n)=\left(\mathrm{P}(\omega,\mathbf{b},n)\right)_{\mathbf{b}\in\Sigma^k}
\]
be the vector of all frequencies of blocks $\mathbf{b}$ of length $k$ among the
first $n$ letters of $\omega$.

Let $\mu$ be a given $\phi$-invariant probability measure on
$X$ and $\omega\in X$. Then we
call the measure $\mu$ associated to $\omega$ if there exists a
infinite sub-sequence $F$ of $\N$ such that for any block
$\mathbf{b}\in\Sigma^k$
$$\lim_{\substack{n\to\infty\\ n\in
    F}}P(\omega,\mathbf{b},n)=\mu([\mathbf{b}]).$$ Furthermore, we call
$\omega$ a generic point for $\mu$ if we can take $F=\N$: then $\mu$
is the only measure associated with $\omega$. If $\mu$ is the maximal
measure, then we call $\omega$ normal. Finally, for a $\phi$-invariant
probability measure on $X$ we define its entropy by
$$H(\mu)=\lim_{N\to\infty}-\frac1N\sum_{a_1,\ldots,a_n\in A^n}
\mu([a_1,\ldots,a_n])\log(\mu([a_1,\ldots,a_n])).$$

The existence of such an invariant measure for the $\beta$-shift was
independently proven by
Gelfond~\cite{gelprimefond1959:common_property_number} and Parry
\cite{parry1960:eta_expansions_real}. Bertrand-Mathis
\cite{bertrand-mathis1988:points_generiques_de} constructed such an
invariant measure by generalizing the construction of Champernowne for
any dynamical system fulfilling the specification property. She also
showed that this measure is ergodic, strongly mixing, its entropy is
$\log\beta$ and it is generic for the maximal measure. An application
of Birkhoff's ergodic theorem yields that almost all numbers
$\omega\in X$ are normal (\textit{cf.} Chapter 3.1.2 of
\cite{dajani_kraaikamp2002:ergodic_theory_numbers}).

Normal sequences for $\beta$-shifts where constructed by Ito and Shiokawa
\cite{ito_shiokawa1975:construction_eta_normal}, however, these
expansions provide no admissible numbers. Furthermore, Bertrand-Mathis and
Volkmann~\cite{bertrand-mathis_volkmann1989:epsilon_k_normal}
constructed normal numbers on connected dynamical systems.

We note that we equivalently could have defined the measure-theoretic
dynamical system with respect to $M$ instead of
$X$. However, since the definition of essentially
and extremely non-normal numbers does not depend on this, we will not consider this
in the following.

As already mentioned above, the aim of the present paper is to show that the
non-normal numbers are a large set in the topological sense. Sigmund
\cite{sigmund1974:dynamical_systems_with} showed, that for any
dynamical system fulfilling the specification property, the set of
non-normal numbers is residual. However, in the present paper we want
to show that even smaller sets, namely the essentially and extremely
non-normal numbers, are also residual.

We start by defining the simplex
of all probability vectors $\Delta_k$ by
\[
  \Delta_k=\left\{(p_{\mathbf{i}})_{\mathbf{i}\in\mathcal{L}_k}:p_{\mathbf{i}}\geq0,
  \sum_{\mathbf{i}\in\mathcal{L}_k}p_{\mathbf{i}}=1\right\}.
\]

Let $\lVert\cdot\rVert_1$ denote the $1$-norm then
$(\Delta_k,\lVert\cdot\rVert_1)$ is a metric space. On the one hand, we
clearly have that any vector $\mathrm{P}_k(\omega,n)$ of frequencies of
blocks of digits of length $k$ belongs to $\Delta_k$. On the other hand, if we assume for example that the word $11$ is forbidden in the expansion. Then the maximum
frequency for the single letter $0$ is $1$ and for $1$ is
$\frac12$. Therefore, the probability vector $(0,1)$ cannot be reached.

Let $\mathrm{A}_k(\omega)$ be the set of accumulation points of
the sequence $(\mathrm{P}_k(\omega,n))_n$ with respect to
$\lVert\cdot\rVert_1$, \textit{i.e.} for $\omega\in X$ we set
\[
\mathrm{A}_k(\omega):=\left\{\mathbf{p}\in\Delta_k:\mathbf{p}\text{ is an
    accumulation point of }(\mathrm{P}_k(\omega,n))_n\right\}.
\]
Then we define $\mathrm{S}_k$ as union of all possible accumulation points, \textit{i.e.}
$$\mathrm{S}_k:=\bigcup_{\omega\in X}\mathrm{A}_k(\omega).$$
We note that in the case of $N$-ary expansions this definition leads
to the shift invariant probability vectors (\textit{cf.} Theorem 0 of
Olsen \cite{olsen2004:extremely_non_normal}).

We call a number $\omega\in X$ essentially non-normal if
for all $i\in\Sigma$ the limit
\[
  \lim_{n\to\infty}\mathrm{P}(\omega,i,n)
\]
does not exist. For the case of $N$-ary expansions Albeverio
\textit{et
  al.}\cite{albeverio_pratsiovytyi_torbin2005:singular_probability_distributions,
  albeverio_pratsiovytyi_torbin2005:topological_and_fractal} could
prove the following
\begin{thm*}[{\cite[Theorem
    1]{albeverio_pratsiovytyi_torbin2005:singular_probability_distributions,
      albeverio_pratsiovytyi_torbin2005:topological_and_fractal}}]~
  Let $(\mathcal{P},\phi)$ be the $N$-ary representation of Example
  \ref{example:Nary}. Then the set of essentially non-normal numbers is residual.
\end{thm*}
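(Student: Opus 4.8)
The plan is to exhibit a residual set of symbolic sequences on which the frequency of \emph{every} single digit oscillates, working in the full shift $X=\Sigma^{\N}$ with its product topology (a Cantor set, hence a Baire space) and transporting the conclusion to $M$ at the end. First I would fix two probability vectors $\mathbf{p},\mathbf{q}\in\Delta_1$ that differ in every coordinate, i.e. $p_i\neq q_i$ for all $i\in\Sigma$; on the full $N$-shift this is trivial, e.g. take $\mathbf{p}$ uniform and $\mathbf{q}$ a small mass-preserving perturbation moving every coordinate. The point is that if both $\mathbf{p}$ and $\mathbf{q}$ lie in $\mathrm{A}_1(\omega)$, then for each digit $i$ the scalar sequence $(\mathrm{P}(\omega,i,n))_n$ accumulates at both $p_i$ and $q_i$; since $p_i\neq q_i$ its limit cannot exist, so $\omega$ is essentially non-normal.

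Next, for a target $\mathbf{r}\in\Delta_1$, a tolerance $\varepsilon>0$ and a threshold $n_0\in\N$ I would set
\[
  U(\mathbf{r},\varepsilon,n_0)=\left\{\omega\in X:\exists\,n>n_0\text{ with }\lVert\mathrm{P}_1(\omega,n)-\mathbf{r}\rVert_1<\varepsilon\right\}.
\]
Since $\mathrm{P}_1(\omega,n)$ depends only on the first $n$ letters of $\omega$, each single condition $\lVert\mathrm{P}_1(\omega,n)-\mathbf{r}\rVert_1<\varepsilon$ is determined by finitely many coordinates and so defines a union of cylinders; hence $U(\mathbf{r},\varepsilon,n_0)$ is open. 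For density, given any cylinder $[\mathbf{w}]$ I would append to the fixed prefix $\mathbf{w}$ a long block $\mathbf{v}$ whose digit counts realise the frequencies $\mathbf{r}$ up to rounding (possible because on the full shift every composition is admissible). If $|\mathbf{v}|$ is large enough that $n:=|\mathbf{w}|+|\mathbf{v}|>n_0$, the negligible contribution of the fixed prefix forces $\lVert\mathrm{P}_1(\omega,n)-\mathbf{r}\rVert_1<\varepsilon$ for every $\omega\in[\mathbf{w}\mathbf{v}]$, so $U(\mathbf{r},\varepsilon,n_0)$ meets every cylinder and is dense.

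I would then assemble
\[
  G_{\mathbf{r}}=\bigcap_{m\geq1}U\!\left(\mathbf{r},\tfrac1m,m\right),
\]
a countable intersection of open dense sets, hence residual by the Baire Category Theorem; by construction every $\omega\in G_{\mathbf{r}}$ admits a subsequence along which $\mathrm{P}_1(\omega,n)\to\mathbf{r}$, so $\mathbf{r}\in\mathrm{A}_1(\omega)$. The intersection $G_{\mathbf{p}}\cap G_{\mathbf{q}}$ is again residual, and by the first paragraph each of its points is essentially non-normal. Finally, in the $N$-ary case the projection $\pi_{\mathcal{P},\phi}$ carries symbolic expansions bijectively to the points of $U_\infty$, so $G_{\mathbf{p}}\cap G_{\mathbf{q}}$ descends to a residual set of essentially non-normal numbers in $U_\infty$, which by the remark preceding the statement is residual in $M$.

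The main obstacle I anticipate is the density step: one must verify quantitatively that padding $\mathbf{w}$ by a block whose empirical digit distribution is near $\mathbf{r}$ really drives the global frequency vector into the $\varepsilon$-ball, controlling both the rounding error in matching $\mathbf{r}$ by integer counts and the $O(|\mathbf{w}|/n)$ error coming from the fixed prefix. For the full shift this is a routine estimate since every block is allowed; the genuine work, deferred to the general specification setting treated later, would be to produce such approximating blocks inside a constrained language using the padding word guaranteed by the specification property.
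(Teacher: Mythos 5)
Your proposal is correct, and it shares the same Baire-category skeleton as the paper's treatment of this statement (which the paper obtains as the special case of Theorem \ref{thm:essentially_non_normal}): force two frequency vectors that differ in every digit coordinate to appear as accumulation points of $(\mathrm{P}_1(\omega,n))_n$ on a countable intersection of dense open sets, conclude that no digit frequency can converge, and transport residuality to $M$ through $U_\infty$. The implementation, however, is genuinely leaner than the paper's. In Section \ref{proof:essentially_non_normal} the paper needs machinery that you bypass entirely: nonemptiness of the frequency-realizing sets $Z_n(\mathbf{q},k)$ is deduced from a multifractal dimension theorem of Olsen (Theorem \ref{thm:equalityspectrum}), admissible concatenation is handled by the padding operation $\odot$ supplied by the specification property (Lemma \ref{lem:distributions}), and the dense open sets are manufactured as unions of interiors of the concatenation sets $C_n$. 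On the full shift all of this collapses: your approximating blocks $\mathbf{v}$ can be written down by hand, every concatenation is admissible, and your sets $U(\mathbf{r},\varepsilon,n_0)$ are automatically open because they are unions of cylinders, so no interiors need be taken and density is a two-line estimate. What your route buys is a self-contained, elementary proof for the $N$-ary case; what the paper's heavier route buys is that the identical skeleton survives when the language is constrained, which is the entire point of the generalization. If you write this up, make two steps explicit: (i) the descent from $X$ to $U_\infty$ uses that $\pi_{\mathcal{P},\phi}^{-1}(U_\infty)$ is co-meager in $X$ (its complement is countable in the $N$-ary case) and that $\pi_{\mathcal{P},\phi}$ restricted to it is a homeomorphism onto $U_\infty$, since digits are locally constant there; (ii) residuality in the dense subspace $U_\infty$ then passes to $M$ exactly by the paper's remark following \eqref{mani:U-infty}.
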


This result has been generalized to Markov partitions whose underlying
language is the full shift by the first author \cite{madritsch2014:non_normal_numbers}. 
Our first results is the following generalization.
\begin{thm}\label{thm:essentially_non_normal}
  Let $\mathcal{P}=\{P_0,\ldots,P_{N-1}\}$ be a number system
  partition for $(M,\phi)$. Suppose that
  \begin{itemize}
  \item $\bigcap_{n=0}^\infty\overline{D_n(\omega)}$ consists of exactly one
  point;
  \item $X_{\mathcal{P},\phi}$ fulfills the specification property;
  \item for all $i\in\Sigma$ there exist 
    $\mathbf{q}_{i,1}=(q_{1,1},...q_{1,N-1}),\mathbf{q}_{i,2}=(q_{2,1},...q_{2,N-1}) \in S_1$
 such that $|q_{1,i}-q_{2,i}|>0$.
  \end{itemize}
  Then the set of essentially
  non-normal numbers is residual.
\end{thm}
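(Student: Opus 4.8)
The plan is to exhibit a dense $G_\delta$ subset of the compact (hence Baire) metric space $X$ consisting entirely of essentially non-normal points; combined with the reduction recorded before the statement (that it suffices to argue on the residual set $U_\infty$ on which expansions are unique, so that symbolic points correspond bijectively to numbers), this gives residuality of the essentially non-normal numbers. For each $i\in\Sigma$ the third hypothesis provides two vectors $\mathbf{q}_{i,1},\mathbf{q}_{i,2}\in S_1$ whose $i$-th coordinates differ; relabelling, denote these coordinate values by $\alpha_i<\beta_i$. Since $S_1=\bigcup_{\omega\in X}\mathrm{A}_1(\omega)$, each target is the $i$-th coordinate of an accumulation point of some sequence $(\mathrm{P}_1(\omega,n))_n$, so for every $\varepsilon>0$ and every $L$ there are allowed words $\mathbf{w}^-,\mathbf{w}^+\in\mathcal{L}$ of length at least $L$ whose letter-$i$ frequencies lie within $\varepsilon$ of $\alpha_i$ and $\beta_i$, respectively. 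I would then set
\[
  E:=\bigcap_{i\in\Sigma}\Bigl(\bigl\{\omega:\liminf_{n}\mathrm{P}(\omega,i,n)\le\alpha_i\bigr\}\cap\bigl\{\omega:\limsup_{n}\mathrm{P}(\omega,i,n)\ge\beta_i\bigr\}\Bigr),
\]
and since $\alpha_i<\beta_i$ for every $i$, no single-letter frequency can converge on $E$, whence $E$ is contained in the set of essentially non-normal points.

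Next I would verify that $E$ is $G_\delta$. For fixed $i$ and $n$ the map $\omega\mapsto\mathrm{P}(\omega,i,n)$ depends only on $\omega\vert n$, hence is locally constant and continuous, so each set $\{\omega:\mathrm{P}(\omega,i,n)<c\}$ is a union of cylinders and therefore open. Writing
\[
  \bigl\{\omega:\liminf_{n}\mathrm{P}(\omega,i,n)\le\alpha_i\bigr\}=\bigcap_{r\ge1}\bigcap_{m\ge1}\bigcup_{n\ge m}\Bigl\{\omega:\mathrm{P}(\omega,i,n)<\alpha_i+\tfrac1r\Bigr\},
\]
and analogously for the $\limsup$ condition with $\mathrm{P}(\omega,i,n)>\beta_i-\tfrac1r$, exhibits $E$ as a countable intersection of open sets. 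By the Baire category theorem it then suffices to prove that each open set $\bigcup_{n\ge m}\{\omega:\mathrm{P}(\omega,i,n)<\alpha_i+\tfrac1r\}$, and its $\limsup$ counterpart, is dense in $X$.

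The density step is where the specification property enters. Fix $i,r,m$ and an arbitrary nonempty basic cylinder $[\mathbf{v}]$ with $\mathbf{v}\in\mathcal{L}$. Choose $\mathbf{w}^-\in\mathcal{L}$ of length at least $L$ (to be fixed) with letter-$i$ frequency within $\tfrac1{2r}$ of $\alpha_i$, and apply the specification property to obtain $\mathbf{u}\in\mathcal{L}$ with $\lvert\mathbf{u}\rvert\le j$ and $\mathbf{v}\mathbf{u}\mathbf{w}^-\in\mathcal{L}$; extend this allowed word to some $\omega\in[\mathbf{v}]\cap X$. Because for a single letter the occurrence counts are additive under concatenation, with $n=\lvert\mathbf{v}\mathbf{u}\mathbf{w}^-\rvert\ge m$ one gets
\[
  \lvert\mathrm{P}(\omega,i,n)-\mathrm{P}(\mathbf{w}^-,i)\rvert\le\frac{\lvert\mathbf{v}\rvert+j}{\lvert\mathbf{w}^-\rvert}<\tfrac1{2r}
\]
once $L$ is large enough, so that $\mathrm{P}(\omega,i,n)<\alpha_i+\tfrac1r$ with $n\ge m$. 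Thus the open set meets every cylinder and is dense; the $\limsup$ sets are treated symmetrically using $\mathbf{w}^+$ and $\beta_i$. Consequently $E$ is a dense $G_\delta$, hence residual, which proves the theorem.

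I expect the density step to be the main obstacle, precisely in the interplay between admissibility and prescribed frequency: one must simultaneously keep the spliced word inside $\mathcal{L}$ and steer the letter-$i$ frequency to the target, and it is exactly the bounded padding length $j$ from specification together with the realizability of $\alpha_i,\beta_i$ by arbitrarily long admissible words (guaranteed by $\mathbf{q}_{i,1},\mathbf{q}_{i,2}\in S_1$) that makes this work. A secondary point worth stressing is that the conditions for distinct $i$ are handled independently inside the Baire framework: one never needs a single $n$ at which all coordinates are extremal, since $\liminf$ and $\limsup$ may be realized along different subsequences, so the finite intersection over $i\in\Sigma$ creates no conflict. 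Finally, the additivity of single-letter counts eliminates any boundary contribution at the junctions, a simplification that would not persist for blocks of length $k>1$.
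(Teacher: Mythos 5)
Your proposal is correct, and it takes a genuinely lighter route than the paper, even though the core mechanism is the same: a Baire category argument in the shift space, with the specification property used to splice a long allowed word of prescribed letter-$i$ frequency after an arbitrary allowed prefix, and with the third hypothesis supplying two targets $\alpha_i<\beta_i$ for each letter. The differences lie in the decomposition and in the input lemmas. The paper builds its residual set as $F=\bigcap_n\bigcup_{k\ge n}I_k$, where $I_k$ is the interior of the set $C_k$ of points whose length-$k$ prefix has been extended, via the iterated $\odot$-concatenation of Lemma \ref{lem:distributions}, by a single word realizing all the target distributions $\mathbf{q}_1,\dots,\mathbf{q}_m$ at successive times $n_1<\cdots<n_m$; and it feeds that lemma with Lemma \ref{lem:Znotempty}, whose proof invokes the multifractal-spectrum result quoted as Theorem \ref{thm:equalityspectrum}. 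You instead write the exceptional set as the standard $\liminf$/$\limsup$ $G_\delta$, so residuality reduces to the density of the open sets $\bigcup_{n\ge m}\{\omega:\mathrm{P}(\omega,i,n)<\alpha_i+\tfrac1r\}$ and their $\limsup$ counterparts, each verified by one splice $\mathbf{v}\mathbf{u}\mathbf{w}^{\pm}$ (your junction estimate $\lvert\mathrm{P}(\omega,i,n)-\mathrm{P}(\mathbf{w}^{\pm},i)\rvert\le(\lvert\mathbf{v}\rvert+j)/\lvert\mathbf{w}^{\pm}\rvert$ is correct precisely because single-letter counts are additive across junctions, which, as you observe, is why this shortcut is confined to $k=1$ -- exactly what essential non-normality needs). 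Moreover you obtain the arbitrarily long words $\mathbf{w}^{\pm}$ with frequency near $\alpha_i$, $\beta_i$ directly from the definition of $S_1$ as the union of the accumulation sets $\mathrm{A}_1(\omega)$, so no multifractal input is required at all. Your approach buys elementarity and self-containedness; the paper's heavier apparatus ($Z_n$, $\odot$, Lemma \ref{lem:distributions}) buys statements that realize many distributions along a single word and that are partly reused in the proof of Theorem \ref{thm:extremely_non_normal}. One caveat, which you share with the paper rather than fall behind it: both arguments establish residuality in $X_{\mathcal{P},\phi}$ and pass to residuality in $M$ only through the remark about $U_\infty$; making that passage precise requires identifying cylinders $[\mathbf{v}]\cap X_{\mathcal{P},\phi}$ with the corresponding open sets $D_n\cap U_\infty$ of $M$ and checking that $\pi_{\mathcal{P},\phi}^{-1}(U_\infty)$ is dense in $X_{\mathcal{P},\phi}$, a step worth an explicit sentence in any write-up.
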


\begin{rem}
  The requirement that for each digit we need at least two possible distributions 
  is sufficient in order to prevent that the underlying language is too simple. For
  example, we want to exclude the case of the shift over the alphabet
  $\{0,1\}$ with forbidden words $00$ and $11$. 
\end{rem}



A different concept of non-normal numbers are those being arbitrarily
close to any given configuration. In particular, we want to generalize
the idea of extremely non-normal numbers and their Ces\`aro variants
to the setting of number system partitions. 

For any infinite word $\omega\in X$ we clearly have
$\mathrm{A}_k(\omega)\subset S_k$. On the other hand, we call
$\omega\in X$ extremely non-k-normal if the set of accumulation points
of the sequence $(\mathrm{P}_k(\omega,n))_n$ (with respect to
$\lVert\cdot\rVert_1$) equals $\mathrm{S}_k$, \textit{i.e.}
$\mathrm{A}_k(\omega)=\mathrm{S}_k$.  Furthermore, we call a number
extremly non-normal if it is extremely non-k-normal for all $k\geq1$.

The set of extremely non-normal numbers for the $N$-ary represenation
has been considered by Olsen \cite{olsen2004:extremely_non_normal}.
\begin{thm*}[{\cite[Theorem 1]{olsen2004:extremely_non_normal}}]~
Let $(\mathcal{P},\phi)$ be the $N$-ary expansion of Example
\ref{example:Nary}. Then the set of extremely non-normal numbers is
residual in $M$.
\end{thm*}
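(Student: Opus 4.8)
The plan is to exhibit the set of extremely non-normal numbers as a dense $G_\delta$ and invoke the Baire Category Theorem. First I would pass from $M$ to the symbolic system. For the $N$-ary expansion the underlying shift is the full $N$-shift, so every finite word is allowed, $\mathcal{L}_k=\Sigma^k$, and (by the remark following the definition of $\mathrm{S}_k$) $\mathrm{S}_k$ is exactly the compact convex polytope of shift-invariant probability vectors in $\Delta_k$. The map $\pi_{\mathcal{P},\phi}$ restricts to a homeomorphism between $U_\infty$ and a co-countable, hence residual, subset of $X$ (the only symbolic sequences excluded are the finitely-many-per-point preimages of the countably many $N$-adic rationals in $M\setminus U_\infty$). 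Since $M\setminus U_\infty$ is meagre, it therefore suffices to prove that the set of extremely non-normal $\omega\in X$ is residual in $X$.

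Second, I would record the $G_\delta$ structure. Fix $k$ and choose a countable dense subset $\{\mathbf{p}^{(j)}\}_j$ of $(\mathrm{S}_k,\lVert\cdot\rVert_1)$. For integers $j,m,n_0$ put
\[
  G(j,m,n_0)=\bigcup_{n>n_0}\left\{\omega\in X:\lVert \mathrm{P}_k(\omega,n)-\mathbf{p}^{(j)}\rVert_1<\tfrac1m\right\}.
\]
As $\mathrm{P}_k(\omega,n)$ depends only on the first $n+k-1$ letters of $\omega$, each set in the union is a finite union of cylinders, so $G(j,m,n_0)$ is open. Unwinding the condition $\mathrm{A}_k(\omega)=\mathrm{S}_k$, the set of extremely non-$k$-normal words is $E_k=\bigcap_{j,m,n_0}G(j,m,n_0)$, and the set of extremely non-normal words is $\bigcap_{k\geq1}E_k$; both are $G_\delta$. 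By Baire it remains to show every $G(j,m,n_0)$ is dense.

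Third comes the density argument, which I expect to be the main obstacle. Its core is a realization lemma: for every $\mathbf{p}\in\mathrm{S}_k$ and every $\delta>0$ there is a finite word $\mathbf{v}\in\Sigma^*$ all of whose length-$k$ block frequencies lie within $\delta$ of $\mathbf{p}$. I would prove this by approximating $\mathbf{p}$, up to $\delta/2$, by a rational convex combination $\sum_i\lambda_i\,\mathbf{p}_{\gamma_i}$ of frequency vectors of periodic words $\gamma_i$, using that periodic measures are dense among the invariant ones for the full shift; concretely, the shift-invariance (marginal) conditions defining $\mathrm{S}_k$ are exactly flow-conservation on the de Bruijn graph of order $k$, so a rational target is realized as an Eulerian-type closed walk whose edge-label word has the prescribed block frequencies. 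Letting $\mathbf{v}$ consist, for each $i$, of about $\lambda_i L$ consecutive copies of $\gamma_i$ with $L$ large, the contribution of the finitely many ``seams'' is $O(1)$ per seam and hence negligible against $\lvert\mathbf{v}\rvert$, so the block frequencies of $\mathbf{v}$ converge to $\sum_i\lambda_i\mathbf{p}_{\gamma_i}$ and eventually lie within $\delta$ of $\mathbf{p}$.

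Finally I would assemble the word. Given an arbitrary cylinder $[\mathbf{w}]$, apply the realization lemma with $\mathbf{p}=\mathbf{p}^{(j)}$ and a small $\delta$, choosing $\lvert\mathbf{v}\rvert$ large enough that $n:=\lvert\mathbf{w}\mathbf{v}\rvert>n_0$, that the fixed prefix $\mathbf{w}$ contributes less than $1/(2m)$ to $\mathrm{P}_k(\cdot,n)$, and that the realization error is below $1/(2m)$. Since every finite word is allowed for the full shift, $\mathbf{w}\mathbf{v}$ is allowed and extends to some $\omega\in[\mathbf{w}]\cap X$ with $\lVert \mathrm{P}_k(\omega,n)-\mathbf{p}^{(j)}\rVert_1<1/m$. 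Hence $G(j,m,n_0)$ meets $[\mathbf{w}]$, so it is dense; thus each $E_k$ is a dense $G_\delta$, their countable intersection is residual in $X$, and transferring back through $\pi_{\mathcal{P},\phi}$ yields a residual set of extremely non-normal numbers in $M$.
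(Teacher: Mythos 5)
Your argument is correct, and it reaches the stated theorem by a genuinely different route than the paper does. The paper never proves Olsen's theorem directly: it recovers it as the $N$-ary special case of Theorem \ref{thm:extremely_non_normal}, whose proof is organized around an auxiliary set $E$ of points whose frequency vectors have ``property $P$'' --- a quantitative condition built from the tower functions $\varphi_m$, designed precisely so that it is inherited by iterated Ces\`aro averages (Lemma \ref{lem:propertyP}); residuality of $E$ (Lemma \ref{lem:Eresidual}) plus the inclusion $E\subseteq\mathbb{E}^{(r)}_k$ (Lemma \ref{lem:Esubset}) then finish the proof. You instead write the set of extremely non-$k$-normal points itself as the countable intersection of the open sets $G(j,m,n_0)$ and prove each of these is dense; for the statement at hand --- accumulation points of the plain frequencies, \emph{i.e.} the case $r=0$ --- this is cleaner, since no property $P$ and no towers are needed, and your decomposition is exactly what $\mathrm{A}_k(\omega)=\mathrm{S}_k$ unwinds to once one uses that $\mathrm{A}_k(\omega)$ is closed. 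The second real difference is how finite words with prescribed block frequencies are produced: the paper gets nonemptiness of the sets $Z_n$ (Lemma \ref{lem:Znotempty}) from the multifractal spectrum result (Theorem \ref{thm:equalityspectrum}), which is what lets it treat arbitrary systems with the specification property, whereas you exploit the full-shift structure of Example \ref{example:Nary} --- $\mathrm{S}_k$ is the polytope of shift-invariant probability vectors (Theorem 0 of \cite{olsen2004:extremely_non_normal}, as remarked in the paper) --- and realize rational invariant vectors by Eulerian circuits in the de Bruijn graph. That is elementary and self-contained, but tied to the $N$-ary case; the paper's heavier input is the price of its generality (specification systems and the Ces\`aro variants, where a direct density argument would in any case need the long stretches of closeness that property $P$ encodes). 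One small repair in your realization step: taking ``about $\lambda_i L$ consecutive copies of $\gamma_i$'' gives component $i$ the weight $\lambda_i q_i/\sum_j\lambda_j q_j$, where $q_i=\lvert\gamma_i\rvert$, rather than $\lambda_i$; either take roughly $\lambda_i L/q_i$ copies, or repeat each circuit of the Eulerian decomposition of the integer flow the same number of times (there $\lambda_i=q_i/q$, so equal repetition is automatically correct).
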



This result was generalized to iterated function systems by Baek and
Olsen~\cite{baek_olsen2010:baire_category_and} and to finite Markov
partitions by the first author
\cite{madritsch2014:non_normal_numbers}. Furthermore, number systems
with infinite set of digits like the continued fraction expansion or
L\"uroth expansion were considered by
Olsen~\cite{olsen2003:extremely_non_normal}, {\v
  S}al{\'a}t~\cite{salat1968:zur_metrischen_theorie}, respectively.
Finally, {\v S}al{\'a}t \cite{salat1968:uber_die_cantorschen}
considered the Hausdorff dimension of sets with digital restrictions
with respect to the Cantor series expansion.

We want to extend this notion to the Ces\`aro averages of the
frequencies. To this end for a fixed block $b_1\ldots b_k\in\Sigma^k$ let
\[\mathrm{P}^{(0)}(\omega,\mathbf{b},n)=\mathrm{P}(\omega,\mathbf{b},n).\]
For $r\geq1$ we recursively define
\[
\mathrm{P}^{(r)}(\omega,\mathbf{b},n)=\frac{\sum_{j=1}^n\mathrm{P}^{(r-1)}(\omega,\mathbf{b},j)}{n}
\]
to be the $r$th iterated Ces\`aro average of the frequency of the
block of digits $\mathbf{b}$ under the first $n$ digits. Furthermore,
we define by
\[
\mathrm{P}_k^{(r)}(\omega,n):=\left(\mathrm{P}^{(r)}(\omega,\mathbf{b},n)\right)_{\mathbf{b}\in\Sigma^k}
\]
the vector of $r$th iterated Ces\`aro averages. As above, we are
interested in the accumulation points. Thus similar to above let
$\mathrm{A}^{(r)}_k(\omega)$ denote the set of accumulation points of the sequence
$(\mathrm{P}^{(r)}_k(\omega,n))_n$ with respect to
$\lVert\cdot\rVert_1$,\textit{i.e.}
\[
\mathrm{A}^{(r)}_k(\omega):=\left\{\mathbf{p}\in\Delta_k:\mathbf{p}\text{ is an
    accumulation point of }(\mathrm{P}^{(r)}_k(\omega,n))_n\right\}.
\]
Now we call a number $r$th iterated Ces\`aro extremely non-k-normal if
the set of accumulation points is the full set, \textit{i.e.}
$\mathrm{A}^{(r)}_k=\mathrm{S}_k$. 

For $r\geq1$ and $k\geq1$ we denote by $\mathbb{E}^{(r)}_k$ the set of
$r$th iterated Ces\`aro extremely non-k-normal numbers of $M$.
Furthermore, for $r\geq1$ we denote by $\mathbb{E}^{(r)}$ the set of
$r$th iterated Ces\`aro extremely non-normal numbers and by
$\mathbb{E}$ the set of completely Ces\`aro extremely non-normal
numbers, \textit{i.e.}
\[
\mathbb{E}=\bigcap_{k}\mathbb{E}_k^{(r)}
\quad\text{and}\quad
\mathbb{E}=\bigcap_{r}\mathbb{E}^{(r)}=\bigcap_{r,k}\mathbb{E}_k^{(r)}.
\]

As above, this has already been considered for the case of the $N$-ary
expansion by Hyde \textit{et al.} \cite{hyde10:_iterat_cesar_baire}. 
\begin{thm*}[{\cite[Theorem 1.1]{hyde10:_iterat_cesar_baire}}]
Let $(\mathcal{P},\phi)$ be the $N$-ary representation of Example
\ref{example:Nary}. Then for all $r\geq1$ the set $\mathbb{E}^{(r)}_1$
is residual.
\end{thm*}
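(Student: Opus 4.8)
The plan is to transfer the problem to the symbolic system $X=\Sigma^\N$ (the full shift of Example~\ref{example:Nary}) and to prove residuality there; since $\pi_{\mathcal{P},\phi}$ identifies $U_\infty$ with the $N$-adic irrationals and $M\setminus U_\infty$ is meager, residuality in $X$ descends to residuality in $M$ by the reduction already recorded before \eqref{mani:U-infty}. First I would isolate two structural facts. Because every word is allowed, for each $\mathbf{p}\in\Delta_1$ one can build $\omega$ with $\mathrm{P}_1(\omega,n)\to\mathbf{p}$, so $\mathrm{S}_1=\Delta_1$. Moreover each $\mathrm{P}_1^{(r)}(\omega,n)$ lies in the convex set $\Delta_1$ (for $r=0$ the coordinates sum to $1$, and Ces\`aro averaging preserves $\Delta_1$), whence $\mathrm{A}_1^{(r)}(\omega)\subseteq\Delta_1=\mathrm{S}_1$ automatically. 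Since a set of accumulation points is closed, the defining condition $\mathrm{A}_1^{(r)}(\omega)=\mathrm{S}_1$ is equivalent to $\mathrm{A}_1^{(r)}(\omega)$ being dense in $\Delta_1$, i.e.\ to containing a fixed countable dense set $\{\mathbf{p}^{(m)}\}_{m\ge1}\subseteq\Delta_1$.

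This reduction turns $\mathbb{E}_1^{(r)}$ into a countable intersection amenable to Baire's theorem. For integers $m,s,L\ge1$ set
\[
  G(m,s,L)=\left\{\omega\in X:\exists\,n\ge L,\ \lVert\mathrm{P}_1^{(r)}(\omega,n)-\mathbf{p}^{(m)}\rVert_1<\tfrac1s\right\},
\]
so that $\mathbb{E}_1^{(r)}=\bigcap_{m,s,L}G(m,s,L)$. Each $G(m,s,L)$ is open: for fixed $n$ the vector $\mathrm{P}_1^{(r)}(\omega,n)$ depends only on the first $n$ letters of $\omega$ (the iterated Ces\`aro average at time $n$ involves only the frequencies $\mathrm{P}(\omega,\mathbf{b},j)$ with $j\le n$), hence $\omega\mapsto\mathrm{P}_1^{(r)}(\omega,n)$ is locally constant, and $G(m,s,L)$ is a union over $n\ge L$ of preimages of an open ball.

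The heart of the matter, and the step I expect to be the main obstacle, is density of each $G(m,s,L)$. Given a cylinder $[\mathbf{w}]$ I must produce $\omega\in[\mathbf{w}]$ whose $r$th Ces\`aro average comes within $\tfrac1s$ of $\mathbf{p}^{(m)}$ at some time $n\ge L$. The difficulty is that iterated Ces\`aro averaging has long memory, so a single long block of the ``right'' frequencies need not pull the $r$th average close to $\mathbf{p}^{(m)}$. I would sidestep this by invoking the regularity of Ces\`aro means: if the raw frequency $\mathrm{P}_1(\omega,n)$ \emph{converges} to $\mathbf{p}^{(m)}$, then every iterated average $\mathrm{P}_1^{(r)}(\omega,n)$ converges to the same limit, irrespective of the finite prefix $\mathbf{w}$, whose contribution is washed out in the limit. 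Concretely, extend $\mathbf{w}$ by an infinite tail realizing digit frequencies tending to $\mathbf{p}^{(m)}$ (possible on the full shift); then $\mathrm{P}_1^{(r)}(\omega,n)\to\mathbf{p}^{(m)}$, so the defining inequality holds for all large $n$, in particular for some $n\ge L$. This places $\omega$ in $[\mathbf{w}]\cap G(m,s,L)$.

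With each $G(m,s,L)$ open and dense, the Baire Category Theorem makes $\mathbb{E}_1^{(r)}=\bigcap_{m,s,L}G(m,s,L)$ residual in $X$, and transporting through $\pi_{\mathcal{P},\phi}$ as above yields residuality in $M$. I emphasize that the category argument is essential precisely because one cannot easily exhibit a single $\omega$ whose $r$th Ces\`aro averages accumulate at \emph{every} point of $\Delta_1$: the inertia of high-order averaging forces increasingly long sojourns to move between targets, whereas Baire's theorem only asks for density of each individual constraint, for which the converging-tail construction suffices.
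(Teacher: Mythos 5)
Your proposal is correct, but it takes a genuinely different route from the paper. The paper obtains the cited statement as a special case of Theorem~\ref{thm:extremely_non_normal}, and its proof never decomposes $\mathbb{E}^{(r)}_1$ itself: it introduces an auxiliary set $E$ of points whose \emph{first-order} frequency vectors satisfy ``property $P$'' --- for every rational target $\mathbf{q}$ and every $m$ there is a time $j$ such that the frequency vector stays $\varepsilon$-close to $\mathbf{q}$ on the entire tower-exponentially long window $j<n<\varphi_m(2^j)$ --- then shows $E$ is residual by writing it as $\bigcap_{h,m,\mathbf{q},i}E_{h,m,\mathbf{q},i}$ with each piece open and dense (Lemma~\ref{lem:Eresidual}), shows that property $P$ survives one round of Ces\`aro averaging (Lemma~\ref{lem:propertyP}, taken from \cite{hyde10:_iterat_cesar_baire}), and concludes $E\subseteq\mathbb{E}^{(r)}_k$ for \emph{every} $r$ simultaneously (Lemma~\ref{lem:Esubset}). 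You instead fix $r$, pull everything back to the full shift, and exhibit $\mathbb{E}^{(r)}_1$ directly as the countable intersection of the sets $G(m,s,L)$, each open because $\mathrm{P}^{(r)}_1(\omega,n)$ depends on finitely many letters, and each dense because a tail with convergent raw frequencies forces convergence of all iterated Ces\`aro averages regardless of the prefix. This is simpler than the paper's argument: you need only one good time $n\geq L$ per constraint rather than closeness along a long window, so the tower functions and the propagation lemma disappear, and you even get the stronger conclusion that $\mathbb{E}^{(r)}_1$ is itself a dense $G_\delta$ in the shift; the Corollary still follows since countable intersections of residual sets are residual. What the paper's heavier machinery buys is, first, a single set $E$ independent of $r$, and second, a density argument (via the sets $Z_n$ and Lemma~\ref{lem:frequency:word}) that works for general systems with the specification property, where your tail construction is not available because not every continuation is an allowed word. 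One point you gloss over is the transport of residuality from the product topology on $X$ to $M$: one should observe that $\pi_{\mathcal{P},\phi}^{-1}(U_\infty)$ is dense in $X$ and that $\pi_{\mathcal{P},\phi}$ restricts to a homeomorphism of it onto $U_\infty$, after which the paper's remark that $M\setminus U_\infty$ is meager finishes the argument; this is routine for the $N$-ary system (the paper sidesteps it entirely by arguing with the cylinders $D_t(\omega)$, which are open subsets of $M$), but it deserves a sentence.
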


In the context of extremely
non-normal numbers our result is the following.
\begin{thm}\label{thm:extremely_non_normal}
Let $k$, $r$ and $N$ be positive integers. Furthermore, let
$\mathcal{P}=\{P_0,\ldots,P_{N-1}\}$ be a number system partition
for $(M,\phi)$. Suppose that $\mathcal{L}_{\mathcal{P},\phi}$ fulfills
the specification property. Then the set $\mathbb{E}^{(r)}_k$ is residual.
\end{thm}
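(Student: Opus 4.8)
The plan is to exhibit $\mathbb{E}^{(r)}_k$ as the image, under the coding map, of a dense $G_\delta$ subset of the symbolic space $X$, and then to transfer residuality to $M$. First I record the structure of $S_k$. As in Olsen's setting, $S_k$ is the set of length-$k$ marginals of the $\phi$-invariant probability measures on $X$; being the image of the nonempty compact convex set of invariant measures under a linear map, $S_k$ is a nonempty compact convex subset of $\Delta_k$, in particular separable, so I fix a countable dense set $\{\mathbf{p}_\ell\}_{\ell\geq1}\subseteq S_k$. I would then verify the easy inclusion $\mathrm{A}^{(r)}_k(\omega)\subseteq S_k$ for every $\omega$: since every accumulation point of $(\mathrm{P}_k(\omega,n))_n$ lies in the closed set $S_k$, for each $\epsilon>0$ all but finitely many $\mathrm{P}_k(\omega,n)$ lie within $\epsilon$ of $S_k$; because $S_k$ is convex, a Cesàro average of vectors $\epsilon$-close to $S_k$ is again $\epsilon$-close to $S_k$ up to the vanishing contribution of the finitely many early terms, and iterating $r$ times gives $\mathrm{dist}(\mathrm{P}^{(r)}_k(\omega,n),S_k)\to0$, whence $\mathrm{A}^{(r)}_k(\omega)\subseteq S_k$.

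It then remains to secure the reverse inclusion $S_k\subseteq\mathrm{A}^{(r)}_k(\omega)$ on a residual set. Since $\mathrm{A}^{(r)}_k(\omega)$ is closed and $\{\mathbf{p}_\ell\}$ is dense in $S_k$, this holds precisely when each $\mathbf{p}_\ell$ is an accumulation point of $(\mathrm{P}^{(r)}_k(\omega,n))_n$. For integers $\ell,m,n_0\geq1$ I set
$$G(\ell,m,n_0)=\{\omega\in X:\ \exists\,n\geq n_0,\ \lVert \mathrm{P}^{(r)}_k(\omega,n)-\mathbf{p}_\ell\rVert_1<1/m\}.$$
Because $\mathrm{P}^{(r)}_k(\omega,n)$ depends only on the first $n+k-1$ letters of $\omega$, each $G(\ell,m,n_0)$ is a union of cylinders, hence open, and $\bigcap_{\ell,m,n_0}G(\ell,m,n_0)$ is exactly the set of $\omega$ with $S_k\subseteq\mathrm{A}^{(r)}_k(\omega)$. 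So it suffices to prove that each $G(\ell,m,n_0)$ is dense.

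Density is the heart of the argument and is where the specification property enters. Given a cylinder $[\mathbf{w}]$ with $\mathbf{w}\in\mathcal{L}$, I want an admissible extension landing in $G(\ell,m,n_0)$. Since $\mathbf{p}_\ell\in S_k$ there is $\omega'\in X$ with $\mathbf{p}_\ell\in\mathrm{A}_k(\omega')$, so arbitrarily long prefixes $\mathbf{v}$ of $\omega'$ have $k$-block frequency vector within any prescribed $\delta$ of $\mathbf{p}_\ell$. By specification I pad $\mathbf{w}$ and $\mathbf{v}$ with a connecting word $\mathbf{u}$, $\lvert\mathbf{u}\rvert\leq j$, so that $\mathbf{w}\mathbf{u}\mathbf{v}\in\mathcal{L}$, and extend to some $\omega\in[\mathbf{w}]\cap X$. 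Choosing $\lvert\mathbf{v}\rvert$ large compared with $\lvert\mathbf{w}\rvert+j$ and $n_0$, the cumulative frequency $\mathrm{P}_k(\omega,n)$ stays $\delta$-close to $\mathbf{p}_\ell$ throughout a long terminal range, since the bounded prefix $\mathbf{w}\mathbf{u}$ contributes only $O((\lvert\mathbf{w}\rvert+j)/n)$. Finally, since the iterated Cesàro mean is a regular summation method, keeping $\mathrm{P}_k(\omega,n)$ within $\delta$ of $\mathbf{p}_\ell$ over a sufficiently long range (length depending on $r$, $m$, and the prefix) forces $\mathrm{P}^{(r)}_k(\omega,n)$ within $1/m$ of $\mathbf{p}_\ell$ at the end of the block, while a long enough block also gives $n\geq n_0$; thus $\omega\in[\mathbf{w}]\cap G(\ell,m,n_0)$.

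The intersection $E:=\bigcap_{\ell,m,n_0}G(\ell,m,n_0)$ is then a dense $G_\delta$ in $X$ on which both inclusions hold, so $\pi_{\mathcal{P},\phi}(E)\subseteq\mathbb{E}^{(r)}_k$; to conclude in $M$ I restrict to $U_\infty$, where coding is injective and $M\setminus U_\infty$ is meager as recorded before the theorem, transporting the dense-$G_\delta$ property to $M$. I expect the genuinely delicate step to be the quantitative Cesàro control in the density argument: the single appended block must be long enough that the $r$-fold averaging "forgets" both the uncontrolled prefix and the earlier, differently distributed phases, and the required length threshold grows with $r$ and must be tracked carefully, whereas realizing $\mathbf{p}_\ell$ by admissible words and splicing via specification is comparatively routine.
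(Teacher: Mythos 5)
Your global architecture---writing the set of $\omega$ with $S_k\subseteq\mathrm{A}^{(r)}_k(\omega)$ as $\bigcap_{\ell,m,n_0}G(\ell,m,n_0)$, each $G(\ell,m,n_0)$ open because $\mathrm{P}^{(r)}_k(\cdot,n)$ depends on finitely many letters, then transferring residuality to $M$ through $U_\infty$---is sound, and for fixed $r$ it is genuinely leaner than the paper's route: you avoid the property-$P$/tower-function formalism (the $\varphi_m$ and the lemma propagating property $P$ from $\mathrm{P}^{(r)}$ to $\mathrm{P}^{(r+1)}$) by working with the $r$-th averages directly. The gap is in the density proof, which is the heart of the theorem. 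From $\mathbf{p}_\ell\in S_k$ you get only a point $\omega'$ with $\mathbf{p}_\ell\in\mathrm{A}_k(\omega')$, i.e. $\mathrm{P}_k(\omega',n_i)\to\mathbf{p}_\ell$ along some subsequence $(n_i)_i$. Taking $\mathbf{v}=\omega'\vert n_i$ therefore controls the frequency vector of $\mathbf{w}\mathbf{u}\mathbf{v}$ at the single time $n=\lvert\mathbf{w}\mathbf{u}\mathbf{v}\rvert$. For $n$ strictly inside the appended block, $\mathrm{P}_k(\omega,n)$ is essentially the partial frequency $\mathrm{P}_k(\omega',n-\lvert\mathbf{w}\mathbf{u}\rvert)$, and these intermediate values are uncontrolled: between consecutive $n_i$ the partial frequencies of $\omega'$ may wander across all of $S_k$ (this is typical---the witness $\omega'$ may itself be extremely non-normal). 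So your assertion that $\mathrm{P}_k(\omega,n)$ stays $\delta$-close to $\mathbf{p}_\ell$ \emph{throughout a long terminal range} does not follow: the $O((\lvert\mathbf{w}\rvert+j)/n)$ bound handles prefix contamination only, not the middle of $\mathbf{v}$. This is fatal precisely for $r\geq1$: $\mathrm{P}^{(r)}_k(\omega,n)$ at the end of the block is an average of the uncontrolled intermediate values, so it need not be within $1/m$ of $\mathbf{p}_\ell$; already for $r=1$ it will sit near a mean of the wandering frequencies. In short, the argument conflates an accumulation point (control at one scale) with a limit (control at all large scales), and the Ces\`aro step needs the latter.

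The repair is exactly the paper's key device. Extract from $\omega'$ a finite word $\gamma$ with $\lVert\mathrm{P}_k(\gamma)-\mathbf{p}_\ell\rVert_1<\delta$ (possible at a time $n_i$, up to a boundary error of order $k/n_i$), and append, via specification, many copies $\gamma^{\odot\ell}$ rather than one long block: at every time past a short burn-in one then sees many complete copies, each with good frequency, plus one partial copy and the padding words, whose total contribution is of order $(\lvert\mathbf{w}\rvert+\lvert\gamma\rvert+\ell(k+j))/n$. This is Lemma \ref{lem:frequency:word} of the paper, and it is what makes $\mathrm{P}_k(\omega,n)$ stay close to $\mathbf{p}_\ell$ on an arbitrarily long run; after that, your quantitative Ces\`aro estimate does finish the density proof (each iteration pushes the start of the controlled range up by a factor of order $1/\delta$, so a run of relative length $(C/\delta)^r$ suffices---no tower is needed for fixed $r$). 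Alternatively, the paper's Lemma \ref{lem:Znotempty} (via Theorem \ref{thm:equalityspectrum}) supplies points whose frequencies \emph{converge} to $\mathbf{p}_\ell$, and a long prefix of such a point also works. A secondary but real issue: you assert, by analogy with Olsen, that $S_k$ is compact, convex, and equal to the set of $k$-marginals of invariant measures; convexity is precisely what your inclusion $\mathrm{A}^{(r)}_k(\omega)\subseteq S_k$ rests on, and for a general system with specification this characterization requires proof rather than citation (the paper itself never addresses this inclusion, so your plan is more complete in intent here, but not yet in substance).
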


Since the set of non-normal numbers is a countable intersection of
sets $\mathbb{E}^{(r)}_k$ we get the following
\begin{cor}
  Let $N$ be a positive integer and
  $\mathcal{P}=\{P_0,\ldots,P_{N-1}\}$ be a number system partition
  for $(M,\phi)$. Suppose that $\mathcal{L}_{\mathcal{P},\phi}$
  fulfills the specification property. Then the sets
  $\mathbb{E}^{(r)}$ and $\mathbb{E}$ are residual.
\end{cor}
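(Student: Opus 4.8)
The plan is to deduce the Corollary from Theorem~\ref{thm:extremely_non_normal} purely by topological bookkeeping, since Theorem~\ref{thm:extremely_non_normal} already provides the hard analytic content. First I would recall the definitions: for fixed $r$ we have $\mathbb{E}^{(r)}=\bigcap_{k\geq1}\mathbb{E}^{(r)}_k$, and the fully Ces\`aro set is $\mathbb{E}=\bigcap_{r,k\geq1}\mathbb{E}^{(r)}_k$. Since $\N$ is countable, both of these are \emph{countable} intersections of the sets $\mathbb{E}^{(r)}_k$.

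The key topological fact I would invoke is that a residual set in a complete (or more generally Baire) metric space is, by definition, a set containing a countable intersection of dense open sets; equivalently its complement is meagre (a countable union of nowhere dense sets). The crucial stability property is that a \emph{countable} intersection of residual sets is again residual: if each $\mathbb{E}^{(r)}_k$ has meagre complement, then $M\setminus\mathbb{E}^{(r)}=\bigcup_{k}(M\setminus\mathbb{E}^{(r)}_k)$ is a countable union of meagre sets, hence meagre, so $\mathbb{E}^{(r)}$ is residual; and likewise $M\setminus\mathbb{E}=\bigcup_{r,k}(M\setminus\mathbb{E}^{(r)}_k)$ is a countable union of meagre sets and therefore meagre, giving that $\mathbb{E}$ is residual. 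I would here cite that $M$ is a compact metric space and hence a Baire space, so the Baire category theorem applies and meagre sets have dense complement, making ``residual'' meaningful.

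Concretely, the steps I would carry out are: (i) fix an arbitrary pair $(r,k)$ and apply Theorem~\ref{thm:extremely_non_normal} to conclude $\mathbb{E}^{(r)}_k$ is residual in $M$; (ii) write each such set as $\mathbb{E}^{(r)}_k\supseteq\bigcap_{m}G^{(r,k)}_m$ for a sequence of dense open sets $G^{(r,k)}_m$; (iii) observe that the families $\{(r,k)\}$ and $\{(r,k,m)\}$ are countable, so that the relevant intersections $\bigcap_{k,m}G^{(r,k)}_m$ and $\bigcap_{r,k,m}G^{(r,k)}_m$ are again countable intersections of dense open sets and hence witness residuality of $\mathbb{E}^{(r)}$ and $\mathbb{E}$ respectively. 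I would also remark that residual sets, being dense, are in particular nonempty, so the corollary is not vacuous.

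I do not expect any genuine obstacle here: the only point requiring minor care is the transcription of ``residual = comeagre'' into a statement stable under countable intersection, and the bookkeeping that all the index sets in play are countable. One small notational caveat I would flag is that the displayed definition of $\mathbb{E}$ in the excerpt writes ``$\mathbb{E}=$'' twice (once as $\bigcap_k \mathbb{E}^{(r)}_k$ and once as $\bigcap_r \mathbb{E}^{(r)}$); I would read the first occurrence as the definition of $\mathbb{E}^{(r)}$ and the second as the definition of $\mathbb{E}$, and structure the proof to establish residuality of both objects in that order, the second following from the first by exactly the same countable-union-of-meagre-sets argument.
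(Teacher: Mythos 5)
Your proposal is correct and matches the paper's own argument, which consists of the single observation (stated just before the corollary) that $\mathbb{E}^{(r)}$ and $\mathbb{E}$ are countable intersections of the residual sets $\mathbb{E}^{(r)}_k$ furnished by Theorem~\ref{thm:extremely_non_normal}, hence residual since countable intersections of residual sets are residual in a Baire space. Your reading of the duplicated display ``$\mathbb{E}=$'' as defining $\mathbb{E}^{(r)}=\bigcap_k\mathbb{E}^{(r)}_k$ first is also the intended one.
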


\section{Proof of Theorem \ref{thm:essentially_non_normal}}\label{proof:essentially_non_normal}

Before we start proving Theorem\ref{thm:essentially_non_normal}, we will construct sets $Z_n$ 
which we will use in order to ``measure'' the distance between
the proportion of occurrences of blocks and $\mathbf{q}$.
Let $k\in\N$ and $\mathbf{q}\in S_k$ be
fixed. For $n\geq1$ let
\[
Z_n=Z_n(\mathbf{q},k)=\left\{\omega\in\bigcup_{\ell\geq kn\lvert\mathcal{L}_k\rvert}\mathcal{L}_\ell\vert
\lVert\mathrm{P}_k(\omega)-\mathbf{q}\rVert_1\leq\frac1n\right\}.
\]

The main idea consists now in the construction of a word having the
desired frequencies. In particular, for a given word $\omega$ we want
to show that we can add a word from $Z_n$
to get a word whose frequency vector is sufficiently near to
$\mathbf{q}$. Therefore, we first need that $Z_n$ is not empty.

\begin{thm}\label{thm:equalityspectrum}
Let $\mathbf{q}\in\mathrm{S}_k$. Then
$$\dim\{\omega\in X\colon\lim_{n\to\infty}\mathrm{P}_k(\omega,n)=\mathbf{q}\}=\inf_{\mathbf{q}\in\mathrm{S}_k} H(\mathbf{q}).$$
\end{thm}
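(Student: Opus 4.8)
The plan is to prove the two inequalities $\ge$ and $\le$ separately, after fixing a dictionary between the Hausdorff dimension of a subset of $U_\infty$ and the exponential growth rate of the admissible words that cover it. Write $E(\mathbf{q})=\{\omega\in X:\lim_{n\to\infty}\mathrm{P}_k(\omega,n)=\mathbf{q}\}$, and for $\mathbf{p}\in\mathrm{S}_k$ let $H(\mathbf{p})$ denote the entropy attached to the frequency vector $\mathbf{p}$ (the supremal entropy among the $\phi$-invariant measures whose $k$-block distribution equals $\mathbf{p}$), in the sense of the entropy defined just before the statement. Since $\mathcal{P}$ is a number system partition and $X$ has the specification property, the closures $\overline{D_n(\omega)}$ are nested and their diameters contract at a uniform exponential rate up to subexponential error. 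I would first make this precise, so that, through the natural covering of $E(\mathbf{q})$ by order-$n$ cylinders, $\dim E(\mathbf{q})$ is squeezed between the lower and the upper exponential growth rate of the number of length-$n$ admissible words needed to cover it; both bounds then reduce to counting admissible words with prescribed $k$-block statistics.

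For the lower bound I would run a Moran–Besicovitch construction inside $E(\mathbf{q})$ and apply the mass distribution principle. Using that the sets $Z_n(\mathbf{q},k)$ are non-empty together with the specification property, I concatenate words drawn from $Z_n$ of rapidly increasing lengths, inserting between consecutive blocks the bounded padding words of length $\le j$ guaranteed by specification; the padding has negligible density, so the limiting $k$-block frequencies of the resulting infinite word are exactly $\mathbf{q}$, placing it in $E(\mathbf{q})$. Spreading mass evenly over the branches of this construction produces a measure whose lower local dimension is at least $H(\mathbf{q})$, whence $\dim E(\mathbf{q})\ge H(\mathbf{q})\ge\inf_{\mathbf{p}\in\mathrm{S}_k}H(\mathbf{p})$. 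This half is routine and, as a by-product, already yields $E(\mathbf{q})\ne\emptyset$ and the non-emptiness of the $Z_n$ required by the surrounding discussion.

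The reverse inequality is where the main difficulty lies. Covering $E(\mathbf{q})$ by the order-$n$ cylinders of its points and invoking the dictionary above bounds $\dim E(\mathbf{q})$ by the exponential growth rate of the number of admissible length-$n$ words whose $k$-block frequency vector lies within $o(1)$ of $\mathbf{q}$; a method-of-types estimate identifies this growth rate as $H(\mathbf{q})$, so the covering argument gives only $\dim E(\mathbf{q})\le H(\mathbf{q})$. To sharpen this to the asserted value $\inf_{\mathbf{p}\in\mathrm{S}_k}H(\mathbf{p})$ one must reconcile this covering count, which is governed by the statistics $\mathbf{q}$, with the infimum of $H$ taken over the \emph{whole} spectrum $\mathrm{S}_k$. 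The key step I expect to be needed is to re-encode, after splicing in the specification padding of length $\le j$, any word with frequencies near $\mathbf{q}$ by a word realizing the entropy-minimizing element of $\mathrm{S}_k$, with a coding whose redundancy is subexponential in $n$; bounding this redundancy uniformly in $n$, so that the covering growth rate collapses to $\inf_{\mathbf{p}\in\mathrm{S}_k}H(\mathbf{p})$ rather than stopping at $H(\mathbf{q})$, is the crux of the argument and the place where the precise definition of $H$ on $\mathrm{S}_k$ and the specification constant $j$ must enter in an essential way.
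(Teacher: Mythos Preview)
The paper does not give an independent proof here: it simply invokes Theorem~6 of Olsen's \emph{Multifractal analysis of divergence points} (J.\ Math.\ Pures Appl., 2003), observes that the two visible differences in setup (specification versus finite-type subshift; number system partition versus graph-directed conformal IFS satisfying the Strong Open Set Condition) are inessential for Olsen's argument, and declares that ``the proof runs along the same lines.'' So your from-scratch programme---Moran--Besicovitch plus mass distribution for the lower bound, cylinder covers plus method of types for the upper---is a genuinely different route from what the paper actually records.

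That said, your plan contains an internal contradiction which stems from reading the printed formula too literally. Your lower bound yields $\dim E(\mathbf{q})\ge H(\mathbf{q})$ and your covering upper bound yields $\dim E(\mathbf{q})\le H(\mathbf{q})$; together they give $\dim E(\mathbf{q})=H(\mathbf{q})$, which is precisely the content of Olsen's theorem in this setting. The ``re-encoding'' step you propose in the last paragraph---pushing the upper bound from $H(\mathbf{q})$ down to $\inf_{\mathbf{p}\in S_k}H(\mathbf{p})$---cannot succeed: if it did, combining it with your own lower bound would force $H(\mathbf{q})\le\inf_{\mathbf{p}}H(\mathbf{p})$ for every $\mathbf{q}\in S_k$, i.e.\ $H$ constant on $S_k$, which is false in general. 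The displayed identity in the statement shadows the free $\mathbf{q}$ by a bound one inside the infimum; the intended right-hand side is $H(\mathbf{q})$ (suitably normalised), and indeed the paper only uses the theorem, via Lemma~\ref{lem:Znotempty}, to extract $\dim E(\mathbf{q})>0$ and hence $E(\mathbf{q})\ne\emptyset$. Drop the re-encoding paragraph: the two halves you already have prove the correct result.
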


\begin{proof}
  This is essentially Theorem 6 of
  \cite{olsen2003:multifractal_analysis_divergence} (see also Theorem
  7.1 of
  \cite{olsen_winter2007:multifractal_analysis_divergence}).
  
  In our considerations we have two main differences. On the one hand,
  we consider dynamical system fulfilling the specification property,
  whereas Olsen \cite{olsen2003:multifractal_analysis_divergence}
  investigates subshifts of finite type modelled by a directed and
  strongly connected multigraph. However, in his proof he never uses
  the finitude of the set of exceptions. This means that they stay
  true if we replace the subshift of finite type by one fulfilling the
  specification property.

  Another difference is that we have a number system partition,
  whereas Olsen \cite{olsen2003:multifractal_analysis_divergence}
  analyses a graph directed self-conformal iterated function system
  satisfying the Strong Open Set Condition. In the iterated function
  system, we have first the functions and the partition and in our
  case we have first a partition and then the restricted
  function. Therefore, this changes only the point of view. Furthermore,
  the Strong Open Set Condition is satisfied by the topological
  partition, which we use.

  After adapting these differences the proof runs along the same lines.
\end{proof}

\begin{lem}\label{lem:Znotempty}
For all $n\geq1$, $\mathbf{q}\in\mathrm{S}_k$ and $k\in\N$
we have $Z_n(\mathbf{q},k)\neq\emptyset$.
\end{lem}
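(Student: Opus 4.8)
The plan is to exploit the very definition of $\mathrm{S}_k$: a point $\mathbf{q}\in\mathrm{S}_k$ is, by construction, approximated by the block frequencies of longer and longer prefixes of a single infinite word, and a prefix of a word in $X$ is an allowed finite word. Note that the specification property is not needed here; the statement is essentially a matter of unwinding the definition of an accumulation point, together with one boundary estimate.

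Concretely, since $\mathbf{q}\in\mathrm{S}_k=\bigcup_{\omega\in X}\mathrm{A}_k(\omega)$, there is an infinite word $\omega'\in X$ with $\mathbf{q}\in\mathrm{A}_k(\omega')$. Thus $\mathbf{q}$ is an accumulation point of the sequence $(\mathrm{P}_k(\omega',m))_m$, so there is a strictly increasing sequence $(m_j)_j$ of positive integers with $\lVert\mathrm{P}_k(\omega',m_j)-\mathbf{q}\rVert_1\to0$. First I would fix $n$ and choose $j$ so large that $\lVert\mathrm{P}_k(\omega',m_j)-\mathbf{q}\rVert_1\leq\frac1{2n}$ and, simultaneously, $m_j$ is large enough for the two length requirements appearing below; since $m_j\to\infty$, any finite lower bound on $m_j$ is met for all large $j$.

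Next I would pass from the infinite word to a finite one. The value $\mathrm{P}_k(\omega',m_j)$ depends only on the first $m_j+k-1$ letters of $\omega'$, so I set $\ell=m_j+k-1$ and let $\omega=\omega'\vert\ell$ be this prefix. As a prefix of an element of $X$, the word $\omega$ is allowed, i.e. $\omega\in\mathcal{L}_\ell$; every length-$k$ factor of $\omega$ is again allowed, so $\mathrm{P}_k(\omega)\in\Delta_k$. Choosing $j$ so that $\ell\geq kn\lvert\mathcal{L}_k\rvert$ places $\omega$ in the index range defining $Z_n$. It then remains to compare the finite-word frequency vector $\mathrm{P}_k(\omega)$ with $\mathrm{P}_k(\omega',m_j)$: the two count the very same occurrences of each block among the windows that fit inside $\omega$, differing at most in the $O(k)$ windows near the right edge and in the normalising denominator. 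Summed over the $\lvert\mathcal{L}_k\rvert$ coordinates this discrepancy is of order $k\lvert\mathcal{L}_k\rvert/\ell$, so enlarging $j$ once more makes it at most $\frac1{2n}$. Under the natural convention in which the finite-word frequency of a word of length $\ell$ is normalised by the number $\ell-k+1$ of windows, the two vectors in fact coincide exactly, and no estimate is needed at all.

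Combining the two bounds by the triangle inequality gives $\lVert\mathrm{P}_k(\omega)-\mathbf{q}\rVert_1\leq\frac1n$, whence $\omega\in Z_n(\mathbf{q},k)$ and the set is nonempty. The only genuinely technical point is the boundary comparison between the finite-word and infinite-word frequency conventions; it is harmless because $\ell$ may be taken arbitrarily large, and it is precisely the role of the length threshold $kn\lvert\mathcal{L}_k\rvert$ in the definition of $Z_n$ to guarantee enough room for it. Everything else is a direct translation of ``accumulation point of prefix frequencies'' into the language of allowed finite words.
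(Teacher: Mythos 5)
Your proof is correct, but it takes a genuinely different and more elementary route than the paper's. The paper proves Lemma \ref{lem:Znotempty} by invoking Theorem \ref{thm:equalityspectrum}: the set of $\omega\in X$ with $\lim_{n\to\infty}\mathrm{P}_k(\omega,n)=\mathbf{q}$ has Hausdorff dimension $\inf_{\mathbf{q}\in\mathrm{S}_k}H(\mathbf{q})$, asserted to be positive, hence that set is non-empty, and a sufficiently long truncation of any such $\omega$ lies in $Z_n$. You instead unwind the definition $\mathrm{S}_k=\bigcup_{\omega\in X}\mathrm{A}_k(\omega)$: some $\omega'\in X$ has $\mathbf{q}$ as an accumulation point of its prefix frequencies, and truncating $\omega'$ at length $m_j+k-1$ along the subsequence realizing the accumulation point yields allowed words, of any required length, whose frequency vectors tend to $\mathbf{q}$; the discrepancy between the finite-word and infinite-word normalisations is $O\bigl(k\lvert\mathcal{L}_k\rvert/\ell\bigr)$ and is absorbed by taking $j$ large. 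Your argument is sound: prefixes of elements of $X$ belong to $\mathcal{L}$ by the definition of the language of a shift space, and your window count is exact, since every occurrence counted by $\mathrm{P}_k(\omega',m_j)$ ends by position $m_j+k-1$. Your route buys two things. First, economy: it needs neither the specification property nor the multifractal machinery behind Theorem \ref{thm:equalityspectrum}, whose proof the paper only sketches by reference. Second, robustness: the paper's deduction hinges on the positivity claim $\inf_{\mathbf{q}\in\mathrm{S}_k}H(\mathbf{q})>0$, which is delicate and in general false, since $\mathrm{S}_k$ contains degenerate frequency vectors (e.g. that of $000\ldots$ in the full shift) with zero entropy, and without strict positivity a dimension statement cannot rule out emptiness; your argument bypasses this entirely, using only the definition of an accumulation point. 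The one thing the paper's approach would give in exchange --- a single infinite word \emph{all} of whose long prefixes lie in $Z_n$, rather than a subsequence of prefixes --- is never actually used elsewhere in the paper.
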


\begin{proof}
If follows from Theorem \ref{thm:equalityspectrum} that
$$\dim\{\omega\in X\colon\lim_{n\to\infty}\mathrm{P}_k(\omega,n)=\mathbf{q}\}=\inf_{\mathbf{q}\in\mathrm{S}_k} H(\mathbf{q})>0,$$
which implies that $\{\omega\in
X\colon\lim_{n\to\infty}\mathrm{P}_k(\omega,n)=\mathbf{q}\}\neq\emptyset$. Thus
we chose a $\omega\in X$ such that
$\lim_{n\to\infty}\mathrm{P}_k(\omega,n)=\mathbf{q}$. Then for
sufficiently large $\ell$ the truncated word $\omega\mid\ell$ lies in
$Z_n$.
\end{proof}

Since we may not put any two words together, we use the specification
property do define a modified concatenation. For any pair of finite
words $\mathbf{a}$ and $\mathbf{b}$ we fix a
$\mathbf{u}_{\mathbf{a},\mathbf{b}}$ with
$\lvert\mathbf{u}_{\mathbf{a},\mathbf{b}}\rvert\leq j$ such that
$\mathbf{a}\mathbf{u}_{\mathbf{a},\mathbf{b}}\mathbf{b}\in\mathcal{L}$. Then
for $\mathbf{a}_1,\ldots,\mathbf{a}_m\in\mathcal{L}$ and $n\in\N$ we
write
$$\mathbf{a}_1\odot\mathbf{a}_2\odot\cdots\odot\mathbf{a}_m
:=\mathbf{a}_1\mathbf{u}_{\mathbf{a}_1,\mathbf{a}_2}\mathbf{a}_2
\mathbf{u}_{\mathbf{a}_2,\mathbf{a}_3}\mathbf{a}_3\cdots
\mathbf{a}_{m-1}\mathbf{u}_{\mathbf{a}_{m-1},\mathbf{a}_m}\mathbf{a}_m$$
and
$$\mathbf{a}^{\odot n}:=
\underbrace{\mathbf{a}\odot\mathbf{a}\odot\cdots\odot\mathbf{a}}_{n\text
 { times}}.$$
Then we have the following result. 

\begin{lem}\label{lem:distributions}
Let $k \in \N$, and let $\mathbf{q}_1, \mathbf{q}_2, ..., \mathbf{q}_m \in S_k$. For every $\varepsilon >0$, and every $\omega_0 \in \mathcal{L}_{\mathcal{P},\phi}$, there exists $\omega \in \mathcal{L}_{\mathcal{P},\phi}$ whose prefix is $\omega_0$, and $n_1< n_2<...<n_m$ such that for all $1\leq i \leq m$ we have
\begin{equation}\label{eq:distributions}
\lVert\mathrm{P}_k(\omega,n_i)-\mathbf{q}_i\rVert_1\leq \varepsilon.
\end{equation}
\end{lem}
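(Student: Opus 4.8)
The plan is to build the desired word $\omega$ by concatenating, via the specification-padded product $\odot$, a long sequence of blocks drawn from the sets $Z_n(\mathbf{q}_i,k)$ constructed above, arranging the blocks so that at $m$ carefully chosen cutoff indices $n_1<\dots<n_m$ the empirical frequency vector is dominated by a block realizing $\mathbf{q}_i$. Concretely, I would fix $\varepsilon>0$ and choose $n$ large enough that $1/n<\varepsilon/2$. By Lemma~\ref{lem:Znotempty} each set $Z_n(\mathbf{q}_i,k)$ is nonempty, so I may pick words $\mathbf{w}_1,\dots,\mathbf{w}_m$ with $\mathbf{w}_i\in Z_n(\mathbf{q}_i,k)$; by definition $\lVert\mathrm{P}_k(\mathbf{w}_i)-\mathbf{q}_i\rVert_1\le 1/n$ and $\lvert\mathbf{w}_i\rvert\ge kn\lvert\mathcal{L}_k\rvert$. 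Starting from the prescribed prefix $\omega_0$, I form
\[
\omega=\omega_0\odot\mathbf{w}_1^{\odot t_1}\odot\mathbf{w}_2^{\odot t_2}\odot\cdots\odot\mathbf{w}_m^{\odot t_m}\odot\cdots,
\]
where the repetition counts $t_1\ll t_2\ll\cdots$ grow fast enough that each fresh stage overwhelms everything before it.

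The key estimate is to control how the block frequency $\mathrm{P}_k(\omega,n_i)$ reacts to three sources of error: the fixed prefix $\omega_0$, the padding words $\mathbf{u}_{\mathbf{a},\mathbf{b}}$ inserted by $\odot$, and the ``boundary'' blocks straddling the $k$-window across concatenation seams. I would take $n_i$ to be the total length of $\omega$ up through the stage $\mathbf{w}_i^{\odot t_i}$. Since each padding word has length at most $j$ and there are at most $t_1+\dots+t_i$ of them inserted among blocks each of length $\ge kn\lvert\mathcal{L}_k\rvert$, the total length contributed by padding and by boundary effects is a bounded fraction—indeed an arbitrarily small fraction—of $n_i$ once $n$ is large; similarly the fixed contribution of $\omega_0$ and of all earlier stages $\mathbf{w}_1^{\odot t_1},\dots,\mathbf{w}_{i-1}^{\odot t_{i-1}}$ is a vanishing fraction of $n_i$ provided $t_i$ is chosen large relative to $t_1,\dots,t_{i-1}$ and the block lengths. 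Thus $\mathrm{P}_k(\omega,n_i)$ differs from the frequency vector of a pure power $\mathbf{w}_i^{\odot t_i}$ by at most $\varepsilon/2$ in the $1$-norm, and the latter differs from $\mathrm{P}_k(\mathbf{w}_i)$ negligibly, hence from $\mathbf{q}_i$ by at most $1/n<\varepsilon/2$. The triangle inequality then gives \eqref{eq:distributions}.

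I expect the main obstacle to be a clean bookkeeping of the seam and padding corrections, since a $k$-block can span the junction between a block and its inserted padding word, so the count of length-$k$ windows is not simply additive over the concatenated pieces. The remedy is purely quantitative: each seam distorts at most $k-1+j$ of the window-positions, the number of seams up to stage $i$ is bounded by $m(t_1+\dots+t_i)$, and every genuine block has length at least $kn\lvert\mathcal{L}_k\rvert$, so the proportion of corrupted positions is $O\!\left(\tfrac{k+j}{kn\lvert\mathcal{L}_k\rvert}\right)$, which is $<\varepsilon/4$ once $n$ is large, \emph{uniformly} in the $t_i$. The only genuinely sequential requirement is the rapid growth condition on the $t_i$: choosing $t_i$ inductively so that the accumulated length of all previous stages is at most $\varepsilon/4$ times the length added at stage $i$ forces the current stage to dominate, which is always achievable since $t_i$ can be taken arbitrarily large. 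With these two reductions the estimate becomes routine, and the constructed $\omega$ has $\omega_0$ as a prefix by construction.
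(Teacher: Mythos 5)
Your argument is correct and is essentially the paper's own proof: starting from $\omega_0$, you $\odot$-concatenate one stage per target vector $\mathbf{q}_i$ using words from the sets $Z_\cdot(\mathbf{q}_i,k)$ (nonempty by Lemma~\ref{lem:Znotempty}), make each stage long enough to swamp the prefix, the padding words and the seam effects, and take $n_i$ to be the length at the end of stage $i$. The only deviation is how domination is achieved---the paper appends a single word from $Z_{l_i}(\mathbf{q}_i,k)$, whose guaranteed length $\geq k\,l_i\,\lvert\mathcal{L}_k\rvert$ automatically dwarfs the past because $l_i$ is chosen proportional to the current prefix length divided by $\varepsilon$, while you repeat a fixed word $\mathbf{w}_i\in Z_n(\mathbf{q}_i,k)$ many times (the device the paper itself uses later in Lemma~\ref{lem:frequency:word})---which costs you the extra, and correctly handled, observation that the seam error is uniformly small in the repetition counts $t_i$.
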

\begin{proof}
Let $\varepsilon>0$ and $\omega_0 \in \mathcal{L}_{\mathcal{P},\phi}$ be given. We will define $\omega=\omega_o\odot\omega_1\odot\omega_2\odot...\odot\omega_m$ recursively. 
For $i\geq1$, let $$l_i\geq \frac{2(|\omega_0\odot\omega_1\odot...\odot\omega_{i-1}|+k+j-1)|\mathcal{L}_k|}{\varepsilon}.$$
Choose any $\omega_i \in Z_{l_i}(\mathbf{q}_i,k)$. We now show that (\ref{eq:distributions}) is satisfied with $\omega=\omega_0\odot\omega_1\odot...\odot\omega_{i}$ and $n_i=|\omega_0\odot\omega_1\odot...\odot\omega_{i}|$.\\
Let $\mathbf{q}_i=(q_{\mathbf{b_1}},...,q_{\mathbf{b_{|\mathcal{L}_k|}}})$ We have
\begin{align*}
\lVert\mathrm{P}_k(\omega_0\odot...\odot\omega_{i},n_i)-\mathbf{q}_i\rVert_1&=\sum_{\mathbf{b}\in \mathcal{L}_k} |\mathrm{P}(\omega_0\odot...\odot\omega_{i},\mathbf{b})-q_{\mathbf{b}}|\\
&\leq \sum_{\mathbf{b}\in \mathcal{L}_k} |\mathrm{P}(\omega_0\odot...\odot\omega_{i},\mathbf{b})-\mathrm{P}(\omega_{i},\mathbf{b})|
+ \sum_{\mathbf{b}\in \mathcal{L}_k} |\mathrm{P}(\omega_{i},\mathbf{b})-q_{\mathbf{b}}|\\
&\leq \sum_{\mathbf{b}\in \mathcal{L}_k} |\mathrm{P}(\omega_0\odot...\odot\omega_{i},\mathbf{b})-\mathrm{P}(\omega_{i},\mathbf{b})|
+ \frac{|\mathcal{L}_k|}{l_i}\\
&\leq \sum_{\mathbf{b}\in \mathcal{L}_k} |\mathrm{P}(\omega_0\odot...\odot\omega_{i},\mathbf{b})-\mathrm{P}(\omega_{i},\mathbf{b})|
+ \frac{\varepsilon}{2},
\end{align*}
by Lemma \ref{lem:Znotempty} and our choice of $l_i$. Now consider $|\mathrm{P}(\omega_0\odot...\odot\omega_{i},\mathbf{b})-\mathrm{P}(\omega_{i},\mathbf{b})|$. The block $\mathbf{b}$ can occur in $\omega_0\odot\omega_1\odot...\odot\omega_{i-1}$, $\omega_i$, and between these two words, hence we have
\begin{align*}
|\mathrm{P}(\omega_0\odot...\odot\omega_{i},\mathbf{b})-&\mathrm{P}(\omega_{i},\mathbf{b})|\\
=&\max\left(\mathrm{P}(\omega_0\odot...\odot\omega_{i},\mathbf{b})-\mathrm{P}(\omega_{i},\mathrm{P}(\omega_{i},\mathbf{b})-\mathbf{b}), \mathrm{P}(\omega_0\odot...\odot\omega_{i},\mathbf{b})\right)\\
\leq& \max\bigg(\frac{\mathrm{P}(\omega_0\odot...\odot\omega_{i-1},\mathbf{b})+ \mathrm{P}(\omega_{i},\mathbf{b})+(k+j-1)}{|\omega_0\odot...\odot\omega_{i}|}-\frac{\mathrm{P}(\omega_{i},\mathbf{b})}{|\omega_i|},\\
&\frac{\mathrm{P}(\omega_i,\mathbf{b})}{|\omega_i|}-\frac{\mathrm{P}(\omega_0\odot...\odot\omega_{i-1},\mathbf{b})+ \mathrm{P}(\omega_{i},\mathbf{b})}{|\omega_0\odot...\odot\omega_{i}|}\bigg)\\
\leq& \max\left(\frac{|\omega_0\odot...\odot\omega_{i-1}|+(k+j-1)}{|\omega_{i}|}, \frac{\omega_0\odot...\odot\omega_{i-1}|+j}{|\omega_{i}|}\right)\\
\leq& \frac{|\omega_0\odot...\odot\omega_{i-1}|+(k+j-1)}{|l_{i}|} \leq \frac{\varepsilon}{2|\mathcal{L}_k|},
\end{align*}
which follows by the choice of $l_i$.

This implies that
$$\lVert\mathrm{P}_k(\omega_0\odot...\odot\omega_{i},n_i)-\mathbf{q}_i\rVert_1 \leq \varepsilon$$
completing the proof of the lemma.
\end{proof}

We have now all the ingredients needed to prove Theorem \ref{thm:essentially_non_normal}.

\noindent\textit{Proof of Theorem \ref{thm:essentially_non_normal}.} Let $\mathbf{q}_1,...,\mathbf{q}_m \in S_1$ be
such that for all $i\in\Sigma$ there exists 
$\mathbf{q}_{1,i}=(q_{i,1},...q_{i,N-1}),\mathbf{q}_{2,i}=(q'_{i,1},...q'_{i,N-1})\in \{\mathbf{q}_1,...,\mathbf{q}_m\}$ 
  with $|q_{i,i}-q'_{i,i}|>0$. Let $0<\varepsilon<\frac{\min_{i\in\Sigma}|q_{i,i}-q'_{i,i}|}{2}$. 
  For each $\omega\in\mathcal{L}_{\mathcal{P},\phi}$ let $\omega\odot u_{\omega,\varepsilon}$ be a word described by Lemma \ref{lem:distributions}.
Then for all $n \in \N$, we define sets $C_n$ as follows: 
$$C_n=\{\omega\odot u_{\omega,\varepsilon}\alpha_1 \alpha_2...  \in X_{\mathcal{P},\phi} |  |\omega|=n, \text{ and } \alpha_i \in \Sigma\}.$$
  Let $I_n$ be the interior of $C_n$. Let $D_n=\cup_{k=n}^\infty I_k$, and $F= \cap_{n=1}^{\infty}D_n$. It is clear that $D_n$ is open and dense in $X_{\mathcal{P},\phi}$. Since $F$ is a countable intersection of open and dense sets, it is residual. We now need to show that if $w\in F$, then $w$ is essentially non-normal. 
  Let $w\in F$. Then there exists $(n_k)_k \subseteq \N$ such that $w \in C_{n_k}$. Lemma \ref{lem:distributions} then implies that for each digit of $i$, the sequence $(\mathrm{P}(\omega, i, n))_n$ does not converge. \hfill\qedsymbol

\section{Proof of Theorem \ref{thm:extremely_non_normal}}

Now we draw our attention to the case of extremely non-normal numbers
and their Ces\`aro variants. Let $k\in\N$ and $\mathbf{q}\in S_k$ be
fixed throughout the rest of this section. 
We consider how many copies of elements in $Z_n$ we have to add in
order to get the desired properties.

\begin{lem}\label{lem:frequency:word}
Let $\mathbf{q}\in\mathrm{S}_k$ and $n,t\in\N$ be positive
integers. Furthermore, let $\omega=\omega_1\ldots\omega_t\in\mathcal{L}_t$ be a
word of length $t$. Then, for any $\gamma\in Z_n(\mathbf{q},k)$ and any 
\begin{gather}\label{m:bound}
\ell\geq R:=t\left(1+\frac{\lvert\gamma\rvert}k\right).
\end{gather}
we get that
\[
\lVert\mathrm{P}_k(\omega\odot\gamma^{\odot\ell})-\mathbf{q}\rVert\leq\frac4n.
\]
\end{lem}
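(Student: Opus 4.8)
The plan is to compare the block--frequency vector of the long word $W:=\omega\odot\gamma^{\odot\ell}$ directly with that of a single copy of $\gamma$, and then to invoke the defining property of $Z_n(\mathbf q,k)$, namely $\lVert\mathrm P_k(\gamma)-\mathbf q\rVert_1\le\frac1n$, through the triangle inequality
\[
\lVert\mathrm P_k(W)-\mathbf q\rVert_1\le\lVert\mathrm P_k(W)-\mathrm P_k(\gamma)\rVert_1+\lVert\mathrm P_k(\gamma)-\mathbf q\rVert_1 .
\]
It therefore suffices to bound the first summand on the right by $\frac3n$.

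To this end I would first unravel the structure of $W$: it is the prefix $\omega$ (of length $t$) followed by $\ell$ copies of $\gamma$, with $\ell$ specification padding words of length at most $j$ inserted (one before the first copy and one between each pair of consecutive copies). For a block $\mathbf b\in\mathcal L_k$ let $N_{\mathbf b}(\cdot)$ count its occurrences, and write $L_W$ and $L_\gamma$ for the numbers of length-$k$ blocks in $W$ and in $\gamma$. Every occurrence of $\mathbf b$ in $W$ either lies entirely inside one of the $\ell$ copies of $\gamma$ --- contributing $\ell\,N_{\mathbf b}(\gamma)$ in total --- or lies inside $\omega$, or straddles one of the padding insertions. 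Since a length-$k$ block can straddle a given padding word in at most $k+j-1$ ways, the total number of occurrences of the latter two kinds, summed over all $\mathbf b$, equals the denominator defect
\[
E:=L_W-\ell L_\gamma=(t-k+1)+P+\ell(k-1)\le(t-k+1)+\ell(k+j-1),
\]
where $P\le\ell j$ is the total padding length.

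The heart of the argument is then an elementary ratio estimate. Writing $N_{\mathbf b}(W)=\ell\,N_{\mathbf b}(\gamma)+e_{\mathbf b}$ with $e_{\mathbf b}\ge0$ and $\sum_{\mathbf b}e_{\mathbf b}=E$, I split
\[
\frac{N_{\mathbf b}(W)}{L_W}-\frac{N_{\mathbf b}(\gamma)}{L_\gamma}
=-\,N_{\mathbf b}(\gamma)\frac{E}{L_WL_\gamma}+\frac{e_{\mathbf b}}{L_W},
\]
sum absolute values over $\mathbf b$, and use $\sum_{\mathbf b}N_{\mathbf b}(\gamma)=L_\gamma$ to obtain the clean bound $\lVert\mathrm P_k(W)-\mathrm P_k(\gamma)\rVert_1\le\frac{2E}{L_W}$. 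Feeding in the two size hypotheses then makes this small: the definition of $Z_n$ forces $\lvert\gamma\rvert\ge kn\lvert\mathcal L_k\rvert$, which bounds the boundary contribution $2\ell(k+j-1)/L_W\le2(k+j-1)/L_\gamma$, while the assumption $\ell\ge R=t(1+\lvert\gamma\rvert/k)$ renders the prefix contribution $2(t-k+1)/L_W\le 2t/(\ell L_\gamma)$ negligible. Since the specification parameter $j$ is a fixed constant dominated by $\lvert\gamma\rvert$, both pieces together stay below $\frac3n$, whence $\frac{2E}{L_W}\le\frac3n$ and the lemma follows.

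I expect the main obstacle to be the combinatorial accounting in the middle step: one must recognise that the excess mass $E$ in the numerators is exactly the denominator defect $L_W-\ell L_\gamma$, so that both error terms produced by the ratio split are governed by the single quantity $E$. This cancellation is what keeps the constant as small as $4$; verifying it requires care with the padding words and with the distinction between the block count $L_W=\lvert W\rvert-k+1$ and the raw length $\lvert W\rvert$, though the latter discrepancy only affects lower-order terms.
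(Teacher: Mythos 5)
Your decomposition is essentially the paper's: compare $\mathrm{P}_k(\omega\odot\gamma^{\odot\ell})$ with the frequency vector of a single copy of $\gamma$ via the triangle inequality, account separately for the occurrences lying in $\omega$ or straddling a padding word (at most $k+j-1$ per junction), and finish with the two defining properties of $Z_n(\mathbf{q},k)$. Within that common scheme your bookkeeping is in fact sharper than the paper's: writing $N_{\mathbf{b}}(W)=\ell N_{\mathbf{b}}(\gamma)+e_{\mathbf{b}}$ and observing that $\sum_{\mathbf{b}}e_{\mathbf{b}}$ equals the denominator defect $E=L_W-\ell L_\gamma$ gives the clean bound $\lVert\mathrm{P}_k(W)-\mathrm{P}_k(\gamma)\rVert_1\leq 2E/L_W$ with no factor $\lvert\mathcal{L}_k\rvert$, whereas the paper bounds every coordinate by the worst-case error $(t+\ell(k+j))/L$ and sums over blocks, incurring a factor $\lvert\mathcal{L}_k\rvert$ that it must then cancel against the $\lvert\mathcal{L}_k\rvert$ appearing in the length bound of $Z_n$. (The paper also interposes the rescaled vector $\tfrac{\ell s}{L}\mathrm{P}_k(\gamma)$ and so splits into three terms rather than your two; that difference is cosmetic.)

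The one place where your write-up is not a proof is the final numerical step. You dismiss the junction term with the remark that $j$ is ``a fixed constant dominated by $\lvert\gamma\rvert$'', but $j\leq\lvert\gamma\rvert$ is not what is needed: you need $2(k+j-1)/L_\gamma\lesssim 1/n$, i.e.\ $\lvert\gamma\rvert\gtrsim n(k+j)$, while membership in $Z_n$ only guarantees $\lvert\gamma\rvert\geq kn\lvert\mathcal{L}_k\rvert$. Your estimate therefore closes only under the extra hypothesis that $j$ is at most of the order of $k\lvert\mathcal{L}_k\rvert$; if $j$ is large compared with $k\lvert\mathcal{L}_k\rvert$, a $\gamma\in Z_n$ of minimal length combined with adversarially chosen padding words makes the junction term of order $j/(kn\lvert\mathcal{L}_k\rvert)$, which exceeds $3/n$ for every $n$. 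You should know, however, that the paper's own proof stumbles at exactly this point: it invokes $L\geq\ell s\geq\ell n(k+j)\lvert\mathcal{L}_k\rvert$, silently upgrading the length bound $kn\lvert\mathcal{L}_k\rvert$ in the definition of $Z_n$ to $n(k+j)\lvert\mathcal{L}_k\rvert$. So the gap is really an inconsistency between the definition of $Z_n$ and the lemma rather than a defect of your argument alone; once the definition is repaired (require length at least $n(k+j)\lvert\mathcal{L}_k\rvert$), your proof goes through verbatim, and with room to spare, since your bound never needed the factor $\lvert\mathcal{L}_k\rvert$ at all.
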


\begin{proof}
We set $s:=\lvert\gamma\rvert$,
$\sigma:=\omega\odot\gamma^{\odot\ell}$ and $L:=\lvert\sigma\rvert$.
For a fixed block $\mathbf{i}$ an occurrence can happen in $\omega$, in
$\gamma$ or somewhere in between. Thus for every
$\mathbf{i}\in\Sigma^k$ we clearly have that 
\[
\frac{\ell s}{L}\mathrm{P}(\gamma,\mathbf{i})
\leq \mathrm{P}(\sigma,\mathbf{i})
\leq\frac{\ell s\mathrm{P}(\gamma,\mathbf{i})}{L}+\frac{t+\ell(k+j-1)}{L}.
\]

Now we concentrate on the occurrences inside the copies of $\gamma$
and show that we may neglect all other occurrences, \textit{i.e.}
\[
\lVert\mathrm{P}_k(\sigma)-\mathbf{q}\rVert
\leq\lVert\mathrm{P}_k(\sigma)-\frac{\ell s}L \mathrm{P}_k(\gamma)\rVert+
\lVert\frac{\ell s}L \mathrm{P}_k(\gamma)-\mathbf{q}\rVert.
\]
We will estimate both parts separately. For the first one we get that
\begin{align*}
\lVert\mathrm{P}_k(\sigma)-\frac{\ell s}L
\mathrm{P}(\gamma,\mathbf{i})\rVert
&=\sum_{\mathbf{i}\in\Sigma^k}\lvert\mathrm{P}(\sigma,\mathbf{i})-\frac{\ell
  s}L
\mathrm{P}(\gamma,\mathbf{i})\rvert
\leq\sum_{\mathbf{i}\in\Sigma^k}\frac{t+\ell(k+j)}{L}\\
&\leq \lvert\mathcal{L}_k\rvert\frac{t+\ell(k+j)}{\ell n(k+j)\lvert\mathcal{L}_k\rvert}
=\frac{t}{\ell n(k+j)}+\frac1n.
\end{align*}
where we have used that $L\geq \ell s\geq \ell n(k+j)\lvert\mathcal{L}_k\rvert$.

For the second part we get that
\begin{align*}
\lVert\frac{\ell s}L \mathrm{P}_k(\gamma)-\mathbf{q}\rVert
&\leq\lVert\frac{\ell s}L\mathrm{P}_k(\gamma)-\mathrm{P}_k(\gamma)\rVert
+\lVert\mathrm{P}_k(\gamma)-\mathbf{q}\rVert\\
&\leq\ell s\lvert\frac1L-\frac1{\ell s}\rvert+\frac1n\\
&\leq \frac{t+\ell j}L+\frac1n\leq\frac{t+\ell j}{\ell n(k+j)\lvert\mathcal{L}_k\rvert}+\frac1n.
\end{align*}

Putting these together yields
\[
\lVert\mathrm{P}_k(\sigma)-\mathbf{q}\rVert\leq
\frac2n+\frac{t}{\ell n(k+j)}+\frac{t+\ell j}{\ell n(k+j)\lvert\mathcal{L}_k\rvert}.
\]

By our assumptions on the size of $\ell$ in \eqref{m:bound} this proves the
lemma.
\end{proof}

As in the papers of Olsen
\cite{olsen2004:extremely_non_normal,olsen2003:extremely_non_normal} our main
idea is to construct a residual set $E\subset\mathbb{E}_k^{(r)}$. But
before we start we want to ease up notation. To this end we recursively define the
function $\varphi_1(x)=2^x$ and $\varphi_m(x)=\varphi_1(\varphi_{m-1}(x))$ for
$m\geq2$. Furthermore, we set
$\mathbb{D}=(\mathbb{Q}^N\cap\mathrm{S}_k)$. Since $\mathbb{D}$ is
countable and dense in $\mathrm{S}_k$ we may concentrate on the
probability vectors $\mathbf{q}\in\mathbb{D}$.

Now we say that a sequence $(\mathbf{x}_n)_n$ in $\R^{N^k}$ has property $P$ if
for all $\mathbf{q}\in\mathbb{D}$, $m\in\N$, $i\in\N$, and
$\varepsilon>0$, there exists a $j\in\N$ satisfying:
\begin{enumerate}
\item $j\geq i$,
\item $j/2^j<\varepsilon$,
\item if $j<n<\varphi_m(j)$ then
  $\lVert\mathbf{x}_n-\mathbf{q}\rVert<\varepsilon$.
\end{enumerate}

Then we define our set $E$ to consist of all frequency vectors having property
$P$, \textit{i.e.} 
\[
E=\{x\in U_\infty:\text{ $(\mathrm{P}^{(1)}_k(x;n))_{n=1}^\infty$ has
  property $P$}\}.
\]

We will proceed in three steps showing that
\begin{enumerate}
\item $E$ is residual, 
\item if $(\mathrm{P}^{(r)}(x;n))_{n=1}^\infty$ has property $P$, then
  also $(\mathrm{P}^{(r+1)}(x;n))_{n=1}^\infty$ has property $P$, and
\item $E\subseteq\mathbb{E}^{(r)}_k$.
\end{enumerate}

\begin{lem}\label{lem:Eresidual}
The set $E$ is residual.
\end{lem}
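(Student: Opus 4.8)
The plan is to realize $E$ as a countable intersection of open dense subsets of $U_\infty$ and then invoke the Baire category theorem. Since property $P$ quantifies over $\mathbf{q}\in\mathbb{D}$ (which is countable), over $m,i\in\N$, and over $\varepsilon>0$, I would first observe that it suffices to test $\varepsilon$ along the sequence $\varepsilon=1/p$, $p\in\N$: the condition gets stronger as $\varepsilon$ decreases, so a witness $j$ for $1/p\le\varepsilon$ is also a witness for $\varepsilon$. For fixed parameters set
\[
G(\mathbf{q},m,i,p)=\left\{x\in U_\infty:\exists\,j\geq i,\ \tfrac{j}{2^j}<\tfrac1p,\ \lVert\mathrm{P}^{(1)}_k(x;n)-\mathbf{q}\rVert<\tfrac1p\ \text{for all}\ j<n<\varphi_m(j)\right\},
\]
so that $E=\bigcap_{\mathbf{q}\in\mathbb{D}}\bigcap_{m,i,p\in\N}G(\mathbf{q},m,i,p)$, a countable intersection. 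It then remains to show each $G(\mathbf{q},m,i,p)$ is open and dense in $U_\infty$.

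Openness is straightforward. Conditions $j\geq i$ and $j/2^j<1/p$ depend only on $j$, not on $x$, so they merely single out a (still infinite) set of \emph{eligible} indices $j$. For each eligible $j$ the inner condition constrains $\mathrm{P}^{(1)}_k(x;n)$ only for the finitely many $n$ with $j<n<\varphi_m(j)$, and each such vector depends only on the first $\varphi_m(j)+k-1$ letters of $x$; since the inequalities are strict, the set of $x$ meeting them is a finite union of cylinder sets, hence open in $U_\infty$. Therefore $G(\mathbf{q},m,i,p)$ is a union over eligible $j$ of open sets, and so is open.

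Density is the substance of the argument, and it is where Lemma \ref{lem:frequency:word} and the Cesàro smoothing enter. Given a basic open set, represented by a prefix $\omega_0$, I would fix $\mathbf{q},m,i,p$ and build $x\in[\omega_0]\cap U_\infty$ lying in $G(\mathbf{q},m,i,p)$. Choose $n$ with $4/n<\varepsilon':=1/(4p)$ and pick $\gamma\in Z_n(\mathbf{q},k)$, which is nonempty by Lemma \ref{lem:Znotempty}; then let $x=\omega_0\odot\gamma\odot\gamma\odot\cdots$, continued so that $x\in U_\infty$. Applying Lemma \ref{lem:frequency:word} to the prefixes $\omega_0\odot\gamma^{\odot\ell'}$ (and absorbing a partial final copy of $\gamma$ into the crude error term, which is $O(|\gamma|/n')$) yields a threshold $N_0$ with $\lVert\mathrm{P}_k(x;n')-\mathbf{q}\rVert\le\delta:=\varepsilon'/2$ for all $n'\ge N_0$. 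The key smoothing step is the bound, valid for $n'>2N_0/\delta$,
\[
\lVert\mathrm{P}^{(1)}_k(x;n')-\mathbf{q}\rVert\le\frac1{n'}\sum_{s=1}^{N_0}\lVert\mathrm{P}_k(x;s)-\mathbf{q}\rVert+\frac1{n'}\sum_{s=N_0+1}^{n'}\lVert\mathrm{P}_k(x;s)-\mathbf{q}\rVert\le\frac{2N_0}{n'}+\delta<2\delta<\tfrac1p,
\]
using that $\ell_1$-distances of probability vectors are at most $2$. Now pick $j\ge\max(i,\,4N_0/\varepsilon')$ eligible (so $j/2^j<1/p$), which guarantees every $n'>j$ already satisfies $n'>2N_0/\delta$; this exhibits the required witness $j$ and places $x\in G(\mathbf{q},m,i,p)$, proving density.

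The main obstacle I expect is the uniform control of the ordinary frequencies $\mathrm{P}_k(x;n')$ at \emph{all} intermediate lengths $n'$ in the long window $(j,\varphi_m(j))$, not merely at the boundaries between blocks: Lemma \ref{lem:frequency:word} is stated for complete concatenations $\omega\odot\gamma^{\odot\ell}$, so I must check that truncating in the middle of a copy of $\gamma$, or at a padding word $\mathbf{u}$ inserted by $\odot$, changes the counts by at most $|\gamma|+j$ and is absorbed once $n'$ is large. Once this is in place, the smoothing estimate shows that the Cesàro averages stay within $\varepsilon$ of $\mathbf{q}$ for all sufficiently large $n'$, so the tower function $\varphi_m$ imposes no real constraint here (its genuine role is reserved for the inductive passage from $\mathrm{P}^{(r)}$ to $\mathrm{P}^{(r+1)}$). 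Assembling the three facts, $E$ is a countable intersection of open dense sets and hence residual.
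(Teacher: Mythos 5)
Your proposal is correct and takes essentially the same route as the paper: decompose $E$ as a countable intersection of sets indexed by $(\mathbf{q},m,i,\varepsilon\text{-level})$, prove openness via finite-coordinate dependence (cylinder sets), and prove density by concatenating a given prefix with infinitely many copies of a word $\gamma\in Z_n(\mathbf{q},k)$ using Lemmas \ref{lem:Znotempty} and \ref{lem:frequency:word}. If anything, your density step is more complete than the paper's own, since you explicitly handle truncations inside copies of $\gamma$ and the passage from control of $\mathrm{P}_k$ to control of the Ces\`aro average $\mathrm{P}^{(1)}_k$ (the paper only bounds $\mathrm{P}_k$ and leaves the smoothing implicit); your one slip --- choosing $4/n<\varepsilon'$ where the claimed bound $\delta=\varepsilon'/2$ really needs $4/n\leq\delta$ --- is harmless because $n$ is free and the final estimate has ample slack.
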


\begin{proof}
For fixed $h,m,i\in\N$ and $\mathbf{q}\in\mathbb{D}$, we say that a
sequence $(\mathbf{x}_n)_n$ in $\R^{N^k}$ has property $P_{h,m,\mathbf{q},i}$ if
for every $\varepsilon>1/h$, there exists $j\in\N$ satisfying:
\begin{enumerate}
\item $j\geq i$,
\item $j/2^j<\varepsilon$,
\item if $j<n<\varphi_m(2^j)$, then $\lVert\mathbf{x}_n-\mathbf{q}\rVert<\varepsilon$.
\end{enumerate}
Now let $E_{h,m,\mathbf{q},i}$ be the set of all points whose
frequency vector satisfies property $P_{h,m,\mathbf{q},i}$,
\textit{i.e.}
\[
E_{h,m,\mathbf{q},i}:=\left\{x\in U_{\infty}:\left(\mathrm{P}^{(1)}_k(x;n)\right)_{n=1}^\infty\text{ has
  property }P_{h,m,\mathbf{q},i}\right\}.
\]
Obviously we have that
\[
E=\bigcap_{h\in\N}\bigcap_{m\in\N}\bigcap_{\mathbf{q}\in\mathbb{D}}\bigcap_{i\in\N}E_{h,m,\mathbf{q},i}.
\]

Thus it remains to show, that $E_{h,m,\mathbf{q},i}$ is open and dense.
\begin{enumerate}
\item\textbf{$E_{h,m,\mathbf{q},i}$ is open.} Let $x\in
  E_{h,m,\mathbf{q},i}$, then there exists a $j\in\N$ such that $j\geq
  i$, $j/2^j<1/h$, and if $j<n<\varphi_m(2^j)$, then
  \[
  \lVert\mathrm{P}^{(1)}_k(x;n)-\mathbf{q}\rVert_1<1/h.
  \]

  Let $\omega\in X$ be such that $x=\pi(\omega)$ and set
  $t:=\varphi_m(2^j)$. Since $D_{t}(\omega)$ is open, there exists a
  $\delta>0$ such that the ball $B(x,\delta)\subseteq
  D_{t}(\omega)$. Furthermore, since all $y\in D_{t}(\omega)$ have
  their first $\varphi_m(2^j)$ digits the same as $x$, we get that
  \[
  B(x,\delta)\subseteq D_{t}(\omega)\subseteq E_{h,m,\mathbf{q},i}.
  \]

\item\textbf{$E_{h,m,\mathbf{q},i}$ is dense.} Let $x\in U_\infty$ and
  $\delta>0$. We must find $y\in B(x,\delta)\cap E_{h,m,\mathbf{q},i}$.
  
  Let $\omega\in X$ be such that $x=\pi(\omega)$. Since
  $\overline{D_t(\omega)}\to0$ and $x\in D_t(\omega)$ there
  exists a $t$ such that $D_t(\omega)\subset B(x,\delta)$. Let
  $\sigma=\omega\vert t$ be the first $t$ digits of $x$.

  Now, an application of Lemma \ref{lem:Znotempty} yields that there exists a finite
  word $\gamma$ such that
  \[
    \lVert\mathrm{P}_k(\gamma)-\mathbf{q}\rVert\leq\frac1{6h}.
  \]
  
  Let $\varepsilon\geq\frac1h$ and $L$ be as in the statement of Lemma
  \ref{lem:frequency:word}. Then we choose $j$ such that
  \[
  \frac{j}{2^j}<\varepsilon\quad\text{and}\quad j\geq \max\left(L,i\right).
  \]
  An application of Lemma \ref{lem:frequency:word} then gives us that
  \[
    \lVert\mathrm{P}_k(\sigma\gamma^*\vert j)-\mathbf{q}\rVert\leq\frac6n=\frac1h\leq\varepsilon.
  \]

  Thus we choose $y\in D_{j}(\sigma\gamma^*)$. Then on the one hand,
  $y\in D_{j}(\sigma\gamma^*)\subset D_{t}(\omega)\subset B(x,\delta)$ and on the other
  hand, $y\in D_{j}(\sigma\gamma^*)\subset E_{h,m,\mathbf{q},i}$

\end{enumerate}
It follows that $E$ is the countable intersection of open and dense
sets and therefore $E$ is residual in $U_\infty$.
\end{proof}

\begin{lem}\label{lem:propertyP}
Let $\omega\in X_{\mathcal{P},\phi}$. If
$(\mathrm{P}^{(r)}(\omega,n))_{n=1}^\infty$ has property $P$, then
also $(\mathrm{P}^{(r+1)}(\omega,n))_{n=1}^\infty$ has property $P$.
\end{lem}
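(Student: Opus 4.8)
The plan is to exploit the super-exponential gap built into property $P$. Write $(\mathbf{x}_n)_n=(\mathrm{P}^{(r)}_k(\omega,n))_n$ and let $(\mathbf{y}_n)_n=(\mathrm{P}^{(r+1)}_k(\omega,n))_n$ denote its Ces\`aro average, so that by definition $\mathbf{y}_n=\frac1n\sum_{s=1}^n\mathbf{x}_s$. Since each $\mathbf{x}_s$ and each target $\mathbf{q}\in\mathbb{D}$ lie in $\Delta_k$, they are probability vectors and hence $\lVert\mathbf{x}_s-\mathbf{q}\rVert_1\leq2$; moreover $\Delta_k$ is convex, so every $\mathbf{y}_n$ again lies in $\Delta_k$. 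Fixing $\mathbf{q}\in\mathbb{D}$, $m,i\in\N$ and $\varepsilon>0$, and replacing $\varepsilon$ by $\min(\varepsilon,1)$, I may assume $\varepsilon\leq1$; the task is then to produce a single threshold $\tilde{j}$ verifying the three clauses of property $P$ for $(\mathbf{y}_n)_n$.

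First I would invoke property $P$ for $(\mathbf{x}_n)_n$ with the same $\mathbf{q}$, with the \emph{larger} index $m+1$ in place of $m$, with $\varepsilon/2$ in place of $\varepsilon$, and with a starting index $i'$ to be fixed below. This yields some $j\geq i'$ with $j/2^j<\varepsilon/2$ and $\lVert\mathbf{x}_n-\mathbf{q}\rVert<\varepsilon/2$ for all $j<n<\varphi_{m+1}(j)$. I then set $\tilde{j}:=\lceil 4j/\varepsilon\rceil$, a threshold lying well above $j$ (as $\varepsilon\leq1$), whose purpose is to render the contribution of the first $j$ terms of each Ces\`aro sum negligible.

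For $\tilde{j}<n<\varphi_m(\tilde{j})$ I would split $\mathbf{y}_n-\mathbf{q}=\frac1n\sum_{s=1}^{j}(\mathbf{x}_s-\mathbf{q})+\frac1n\sum_{s=j+1}^{n}(\mathbf{x}_s-\mathbf{q})$. The head is bounded in norm by $\tfrac{2j}{n}\leq\tfrac{2j}{\tilde{j}}\leq\varepsilon/2$. For the tail I need every index $s\in(j,n]$ to fall inside the good window of $(\mathbf{x}_n)_n$, i.e. $n<\varphi_{m+1}(j)$; granting this, each summand has norm $<\varepsilon/2$ and so does the average. Adding the two pieces gives $\lVert\mathbf{y}_n-\mathbf{q}\rVert<\varepsilon$, while the remaining clauses $\tilde{j}\geq i$ and $\tilde{j}/2^{\tilde{j}}<\varepsilon$ hold once $j$, and hence $\tilde{j}$, is taken large, which I arrange by enlarging $i'$.

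The crux --- and the very reason property $P$ is phrased through the fast-growing hierarchy $\varphi_m$ --- is the window containment $\varphi_m(\tilde{j})\leq\varphi_{m+1}(j)$ used in the tail estimate. Since $\varphi_{m+1}=\varphi_m\circ\varphi_1$, one has $\varphi_{m+1}(j)=\varphi_m(2^j)$, so by monotonicity of $\varphi_m$ this reduces to $\lceil4j/\varepsilon\rceil\leq2^j$, which holds for all sufficiently large $j$. Thus the only genuine work is to choose $i'$ large enough that this inequality, together with clauses (1) and (2) for $\tilde{j}$, is guaranteed; the rest is the routine triangle-inequality bookkeeping above. I expect this window-nesting step to be the main obstacle, as it is exactly where the linear blow-up factor $4/\varepsilon$ in the new threshold must be absorbed by the extra exponential separating $\varphi_m$ from $\varphi_{m+1}$.
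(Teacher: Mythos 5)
Your proof is correct and takes essentially the same route as the paper: both invoke property $P$ for the $r$-th averages at level $m+1$ with a shrunken $\varepsilon$, split the Ces\`aro average into a negligible head and a tail lying inside the good window, and handle the window nesting via $\varphi_m(\tilde{j})\leq\varphi_{m+1}(j)=\varphi_m(2^j)$. The only cosmetic difference is your threshold $\tilde{j}=\lceil 4j/\varepsilon\rceil$ (head bounded by $2j/n\leq\varepsilon/2$), whereas the paper takes the new threshold to be $2^{j'}$, which makes the window containment an exact identity and lets clause (2) of property $P$ bound the head.
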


This is Lemma 2.2 of \cite{hyde10:_iterat_cesar_baire}. However, the
proof is short so we present it here for completeness.

\begin{proof}
Let $\omega\in X_{\mathcal{P},\phi}$ be such that
$(\mathrm{P}_k^{(r)}(\omega;n))_{n=1}^\infty$ has property 
$P$, and fix $\varepsilon>0,\mathbf{q}\in\mathrm{S}_k$, $i\in\N$ and
$m\in\N$. Since $(\mathrm{P}_k^{(r)}(\omega,n))_{n=1}^\infty$ has property
$P$, there exists $j'\in\N$ with $j'\geq i$,
$j'/2^{j'}<\varepsilon/3$, and such that for
$j'<n<\varphi_{m+1}(2^{j'})$ we have that
$\lVert\mathrm{P}_k^{(r)}(\omega,n)-\mathbf{q}\rVert<\varepsilon/3$.

We set $j=2^{j'}$ and show that
$(\mathrm{P}_k^{(r+1)}(\omega,n))_{n=1}^\infty$ has property $P$ with this
$j$. For all $j<n<\varphi_m(2^j)$ (\textit{i.e.}
$2^{j'}<n<\varphi_{m+1}(2^{j'})$), we have
\begin{align*}
\lVert\mathrm{P}_k^{(r+1)}(\omega,n)-\mathbf{q}\rVert
&=\lVert\frac{\mathrm{P}_k^{(r)}(\omega,1)+\mathrm{P}_k^{(r)}(\omega,2)+\cdots+\mathrm{P}_k^{(r)}(\omega,n)}{n}-\mathbf{q}\rVert\\
&=\lVert\frac{\mathrm{P}_k^{(r)}(\omega,1)+\mathrm{P}_k^{(r)}(\omega,2)+\cdots+\mathrm{P}_k^{(r)}(\omega,j')}{n}\right.\\
&\quad+\left.\frac{\mathrm{P}_k^{(r)}(\omega,j'+1)+\mathrm{P}_k^{(r)}(\omega,2)+\cdots+\mathrm{P}_k^{(r)}(\omega,n)-(n-j')\mathbf{q}}{n}-\frac{j'\mathbf{q}}{n}\rVert\\
&\leq\frac{\lVert\mathrm{P}_k^{(r)}(\omega,1)+\mathrm{P}_k^{(r)}(\omega,2)+\cdots+\mathrm{P}_k^{(r)}(\omega,j')\rVert}{n}\\
&\quad+\frac{\lVert\mathrm{P}_k^{(r)}(\omega,j'+1)-\mathbf{q}\rVert+\cdots+\lVert\mathrm{P}_k^{(r)}(\omega,n)-\mathbf{q}\rVert}{n}
  -\frac{\lVert j'\mathbf{q}\rVert}{n}\\
&\leq\frac{j'}{n}+\frac{\varepsilon}3\frac{n-j'}{n}+\frac{j'}{n}
\leq\frac{j'}{2^{j'}}+\frac{\varepsilon}3+\frac{j'}{2^{j'}}
\leq\frac{\varepsilon}3+\frac{\varepsilon}3+\frac{\varepsilon}3
=\varepsilon.
\end{align*}
\end{proof}

\begin{lem}\label{lem:Esubset}
The set $E$ is a subset of $\mathbb{E}^{(r)}_k$.
\end{lem}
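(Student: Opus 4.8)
The plan is to fix $x\in E$ and prove directly that $\mathrm{A}^{(r)}_k(x)=\mathrm{S}_k$, which is precisely the assertion $x\in\mathbb{E}^{(r)}_k$. By the definition of $E$, the sequence $(\mathrm{P}^{(1)}_k(x;n))_n$ has property $P$. Applying Lemma \ref{lem:propertyP} inductively $r-1$ times upgrades this to the statement that $(\mathrm{P}^{(r)}_k(x;n))_n$ has property $P$ as well. Hence it is enough to show: whenever the sequence $(\mathrm{P}^{(r)}_k(x;n))_n$ has property $P$, its set of accumulation points is exactly $\mathrm{S}_k$. I would establish this by proving the two inclusions separately.

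For the inclusion $\mathrm{S}_k\subseteq\mathrm{A}^{(r)}_k(x)$ I first show $\mathbb{D}\subseteq\mathrm{A}^{(r)}_k(x)$. Fix $\mathbf{q}\in\mathbb{D}$ and let $\varepsilon>0$ and $i\in\N$ be arbitrary; replacing $i$ by $\max(i,2)$ if necessary, property $P$ applied with $m=1$ produces $j\geq i$ such that $\lVert\mathrm{P}^{(r)}_k(x;n)-\mathbf{q}\rVert<\varepsilon$ for every $n$ in the nonempty range $j<n<\varphi_1(j)=2^{j}$. Choosing any such $n$, which satisfies $n>j\geq i$, exhibits a term of index exceeding $i$ lying within $\varepsilon$ of $\mathbf{q}$. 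As $\varepsilon$ and $i$ were arbitrary, $\mathbf{q}$ is an accumulation point of $(\mathrm{P}^{(r)}_k(x;n))_n$, so $\mathbb{D}\subseteq\mathrm{A}^{(r)}_k(x)$. Since the set of accumulation points of a sequence is closed and $\mathbb{D}$ is dense in $\mathrm{S}_k$, taking closures yields $\mathrm{S}_k=\overline{\mathbb{D}}\subseteq\mathrm{A}^{(r)}_k(x)$.

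For the reverse inclusion $\mathrm{A}^{(r)}_k(x)\subseteq\mathrm{S}_k$ I would invoke the structural fact that $\mathrm{S}_k$ is a convex and compact subset of $\Delta_k$ (it coincides with the set of block distributions $(\mu([\mathbf{b}]))_{\mathbf{b}\in\Sigma^k}$ of shift-invariant probability measures $\mu$ on $X$, hence is the continuous image of the convex, compact set of such measures). Every term $\mathrm{P}^{(0)}_k(x;n)=\mathrm{P}_k(x;n)$ lies in $\Delta_k$, and all accumulation points of $(\mathrm{P}^{(0)}_k(x;n))_n$ lie in $\mathrm{S}_k$ by the very definition of $\mathrm{S}_k$ as a union of such accumulation-point sets; consequently $\operatorname{dist}(\mathrm{P}^{(0)}_k(x;n),\mathrm{S}_k)\to0$. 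An $r$-fold application of the elementary fact that a Ces\`aro average of points that are eventually $\varepsilon$-close to a fixed convex set stays eventually $\varepsilon$-close to that set then gives $\operatorname{dist}(\mathrm{P}^{(r)}_k(x;n),\mathrm{S}_k)\to0$, so every accumulation point of $(\mathrm{P}^{(r)}_k(x;n))_n$ lies in the closed set $\mathrm{S}_k$. Combining the two inclusions yields $\mathrm{A}^{(r)}_k(x)=\mathrm{S}_k$, i.e. $x\in\mathbb{E}^{(r)}_k$, and therefore $E\subseteq\mathbb{E}^{(r)}_k$.

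The main obstacle I anticipate is the reverse inclusion. The forward inclusion is an almost immediate reading-off of property $P$ together with the density of $\mathbb{D}$, whereas $\mathrm{A}^{(r)}_k(x)\subseteq\mathrm{S}_k$ genuinely requires the input that $\mathrm{S}_k$ is convex and compact, combined with the stability of nearness-to-a-convex-set under iterated Ces\`aro averaging. If one only knew $\mathrm{S}_k\subseteq\Delta_k$ without convexity, the averaged vectors could in principle accumulate outside $\mathrm{S}_k$, so this is the step where the geometry of $\mathrm{S}_k$ must be used with care.
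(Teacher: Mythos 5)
Your proposal is correct, and its first half is essentially the paper's own proof: the paper also reduces to showing that every $\mathbf{p}\in\mathrm{S}_k$ is an accumulation point of $(\mathrm{P}^{(r)}_k(\omega,n))_n$, by upgrading property $P$ from $r=1$ to general $r$ via Lemma \ref{lem:propertyP} and then approximating $\mathbf{p}$ by rational vectors $\mathbf{q}\in\mathbb{D}$ and invoking property $P$ (the paper extracts a subsequence $n_h$ with $\lVert\mathbf{p}-\mathrm{P}^{(r)}_k(\omega,n_h)\rVert<2/h$, which is the same diagonal/density argument you phrase as ``the set of accumulation points is closed and contains $\mathbb{D}$''). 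Where you genuinely diverge is the reverse inclusion $\mathrm{A}^{(r)}_k(x)\subseteq\mathrm{S}_k$: the paper's proof stops after establishing $\mathrm{S}_k\subseteq\mathrm{A}^{(r)}_k(\omega)$ and never addresses it, even though the definition of $\mathbb{E}^{(r)}_k$ demands the equality $\mathrm{A}^{(r)}_k=\mathrm{S}_k$, and for $r\geq1$ the inclusion is not automatic (the remark ``$\mathrm{A}_k(\omega)\subset\mathrm{S}_k$'' in the paper only covers $r=0$, and since $\mathrm{S}_k$ can be a proper subset of $\Delta_k$, e.g.\ for the golden-mean shift, Ces\`aro averages could a priori escape it). Your argument --- identify $\mathrm{S}_k$ with the convex compact set of $k$-block marginals of shift-invariant measures, then use the stability of $\varepsilon$-closeness to a convex set under iterated Ces\`aro averaging --- fills this gap correctly; note only that the identification itself is nontrivial: the inclusion of invariant marginals into $\mathrm{S}_k$ rests on the existence of generic points for every invariant measure, which holds here precisely because of the specification property (Sigmund's theorem, cited in the paper), while the opposite inclusion is the standard weak-$*$ compactness argument. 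So your write-up is not just a valid alternative; it is more complete than the published proof, at the modest cost of importing this measure-theoretic description of $\mathrm{S}_k$.
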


\begin{proof}
We will show, that for any $x\in E$ we also have
$x\in\mathbb{E}^{(r)}_k$. To this end, let $x\in E$ and $\omega\in
X_{\mathcal{P},\phi}$ be the symbolic expansion of $x$. Since
$(\mathrm{P}_k^{(1)}(\omega,n))_n$ has property $P$, by Lemma \ref{lem:propertyP}
we get that $(\mathrm{P}_k^{(r)}(\omega,n)$ has property $P$.

Thus it suffices to show that $\mathbf{p}$ is an accumulation point of
$(\mathrm{P}_k^{(r)}(\omega,n))_n$ for any $\mathbf{p}\in\mathrm{S}_k$. Therefore,
we fix $h\in\N$ and find a $\mathbf{q}\in\mathbb{D}$ such that
\[
\lVert\mathbf{p}-\mathbf{q}\rVert<\frac1h.
\]
Since $(\mathrm{P}_k^{(r)}(\omega,n))_n$ has property $P$ for any $m\in\N$ we
find $j\in\N$ with $j\geq h$ and such that if $j<n<\varphi_m(2^j)$ then
$\lVert\mathrm{P}_k^{(r)}(\omega,n)-\mathbf{q}\rVert<\frac1h$. Hence let $n_h$
be any integer with $j<n_h<\varphi_m(2^j)$, then
\[
\lVert\mathrm{P}_k^{(r)}(\omega,n_h)-\mathbf{q}\rVert<\frac1h.
\]

Thus each $n_h$ in the sequence $(n_h)_h$ satisfies
\[
\lVert\mathbf{p}-\mathrm{P}_k^{(r)}(\omega,n_h)\rVert
\leq\lVert\mathbf{p}-\mathbf{q}\rVert+\lVert\mathrm{P}_k^{(r)}(\omega,n_h)-\mathbf{q}\rVert<\frac2h.
\]
Since $n_h>h$ we may extract an increasing sub-sequence $(n_{h_u})_u$ such that
$\mathrm{P}_k^{(r)}(\omega,n_{h_u})\rightarrow\mathbf{p}$ for
$u\rightarrow\infty$. Thus $\mathbf{p}$ is an accumulation point of
$\mathrm{P}_k^{(r)}(\omega,n)$, which proves the lemma.
\end{proof}

\begin{proof}[Proof of Theorem  \ref{thm:extremely_non_normal}]
Since by Lemma \ref{lem:Eresidual} $E$ is residual in $U_\infty$ and
by Lemma \ref{lem:Esubset} $E$ is a subset of $\mathbb{E}^{(r)}_k$ we get that
$\mathbb{E}^{(r)}_k$ is residual in $U_\infty$. Again we note that
$M\setminus U_\infty$ is the countable union of nowhere dense sets and
therefore $\mathbb{E}^{(r)}_k$ is also residual in $M$. 
\end{proof}


\providecommand{\bysame}{\leavevmode\hbox to3em{\hrulefill}\thinspace}
\providecommand{\MR}{\relax\ifhmode\unskip\space\fi MR }
\providecommand{\MRhref}[2]{%
  \href{http://www.ams.org/mathscinet-getitem?mr=#1}{#2}
}
\providecommand{\href}[2]{#2}

\end{document}